\newcommand{\lvt}{\left|\kern-1.35pt\left|\kern-1.3pt\left|}
\newcommand{\rvt}{\right|\kern-1.3pt\right|\kern-1.35pt\right|}
\newtheorem{thm}{Theorem}[section]
\newtheorem{cor}[thm]{Corollary}
\newtheorem{lem}[thm]{Lemma}
\newtheorem{prop}[thm]{Proposition}
\newtheorem{defn}[thm]{Definition}
\theoremstyle{remark}
 \def\d{\mathrm{d}}
 \def\i{\mathrm{i}}
 \def\e{\mathrm{e}}
 \def\fG{{\mathfrak G}}
 \def\a{{\alpha}}
 \def\b{{\beta}}
 \def\t{{\theta}}
 \def\l{{\lambda}}
 \def\CM{{\mathcal M}}
 \def\CC{{\mathbb C}}
 \def\NN{{\mathbb N}}
 \def\RR{{\mathbb R}}
 \def\TT{{\mathbb T}}
 \def\ZZ{{\mathbb Z}}
\def\lla{\langle{\kern-2.5pt}\langle}      
\def\rra{\rangle{\kern-2.5pt}\rangle}
\newcommand{\wh}{\widehat}
\def\fD{{\mathfrak D}}
\def\fG{{\mathfrak G}}
\def\f{\frac}
\begin{document}

\title[$\ell^1$-summability and Fourier series of B-splines]
{$\ell^1$-summability and Fourier series of B-splines with
  respect to their knots}

\author{Martin Buhmann}
\address{Justus-Liebig University, Lehrstuhl Numerische Mathematik, 35392 Giessen, Germany}
\email{Martin.Buhmann@math.uni-giessen.de}
\author{Janin J\"ager}
\address{Justus-Liebig University, Lehrstuhl Numerische Mathematik, 35392 Giessen, Germany}
\email{janin.jaeger@math.uni-giessen.de}
\author{Yuan~Xu}
\address{Department of Mathematics, University of Oregon, Eugene, 
OR 97403--1222, USA}
\email{yuan@uoregon.edu} 
\thanks{The first author was funded by the Deutsche Forschungsgemeinschaft 
(DFG-German research foundation)Projektnummer: 461449252. The third author thanks the Alexander von Humboldt Foundation for an AvH award that 
supports his visit to Justus-Liebig University, during which the work was carried out; he was partially supported by 
Simons Foundation Grant \#849676.}
\date{\today}  
\subjclass[2010]{41A15, 42A 16, 42A32}
\keywords{Fourier series, $\ell^1$-invariant, B-spline function, biorthogonality, positive definite function}
 
\begin{abstract} 
We study the $\ell^1$-summability of functions in the $d$-dimensional
torus $\TT^d$ and so-called $\ell^1$-invariant functions. Those are functions
on the torus whose Fourier coefficients depend only on the
$\ell^1$-norm of their indices. Such functions are characterized as 
divided differences that have $\cos \t_1,\ldots,\cos\t_d$ as knots for
$(\t_1\,\ldots, \t_d) \in \TT^d$. It leads us to consider the $d$-dimensional 
Fourier series of univariate B-splines with respect to its knots, which turns 
out to enjoy a simple bi-orthogonality that can be used to obtain an orthogonal
series of the B-spline function.   
\end{abstract} 
 
\maketitle 

\section{Introduction}
\setcounter{equation}{0}

We consider a problem originating from the $\ell^1$-summability of multivariate Fourier series. Let $f$ be a
$2\pi$-periodic function in $L^2(\TT^d)$ and let $\hat f_\a$ be the Fourier coefficient of $f$ with the multi-index 
$\a = (\a_1,\ldots, \a_d) \in \ZZ^d$. Let 
$$
S_n^{(1)} (f;\t)= \sum_{|\a| \le n} \hat f_\a \e^{\i \a\cdot \t}, \quad \t \in \TT^d, \quad n \in \NN_0,
$$
be the $n$-th $\ell^1$-partial sum of its Fourier series, where $|\a|=|\a|_1 = |\a_1|+ \cdots + |\a_d|$, so that the 
summation is over indices in the $\ell^1$-ball of radius $n$. The $\ell^1$-summability has been studied in 
\cite{BX1, N, SV, FW1, FW2, FW3} and it is closely related to the summability of Fourier series in orthogonal 
polynomials on the cube \cite{X95}. The Dirichlet kernel of $S_n^{(1)}(f)$ turns out to be a divided difference to
be defined below in the form
$$
    D_{n}^{(1)}(\t) = [\cos \t_1, \ldots, \cos \t_d] G_{n,d}, \qquad \t \in \TT^d,
$$
where $G_{n,d}$ is a function of one variable as shown in \cite{BX1,X95} (see \eqref{eq:Gnd=} in the next section).
The divided difference can be written as an integral with a Peano kernel; in particular, for a $(d-1)$-times differentiable function $F: [-1,1]\to 
\CC$, 
$$
    [\cos \t_1,\ldots,\cos \t_d] F = \int_{-1}^1 F^{(d-1)}(u) M_{d-1}(u| \cos \t_1,\ldots,\cos \t_d) \d u,
$$ 
where $u \mapsto M_{d-1}(u| \cos \t_1,\ldots,\cos \t_d)$ is the B-spline function, which is a piecewise polynomial
function in $C^{d-2}([-1,1])$ with $\cos \t_1,\ldots, \cos \t_d$ as its knots (see the next section for its definition). 
Motivated by the $\ell^1$-summability and functions defined by the above divided difference, we call a function 
$\ell^1$-invariant if $\hat f_\a = \hat f_\b$ whenever $|\a| = |\b|$ and study properties of such functions. 

Our analysis is partially motivated by the study in \cite{BX1}, where the $\ell^1$-summability of the Fourier 
transform in $\RR^d$, defined by 
$$
 R_{\rho,\d}^{(1)}(f; x) = \int_{|v|_1 \le \rho} \hat f(v) \e^{\i v \cdot x} \d v, \quad x \in \RR^d \quad\hbox{and}\quad \rho \ge 0,
$$
is treated and its associated Dirichlet kernel is shown to be given as a divided difference,
$$
     \fD_{\rho, \d}(x) =  \left[x_1^2,\ldots,x_d^2\right] \fG_{\rho, d}, \qquad x \in \RR^d,
$$
where $\fG_{\rho,d}$ is a function of one variable. The Fourier transform of the B-spline function 
$x \mapsto M_{d-1}(u| x_1^2,\ldots, x_d^2)$, considered as a function of its knots, is analyzed in \cite{BX1}, 
which turns out to enjoy a rich structure and provides necessary tools for studying the class of $\ell^1$-invariant
functions $f(\|\cdot\|_1)$ defined on $\RR^d$. In particular, it leads to a characterization of $f: \RR_+ \to 
\RR$ so that $x \mapsto f(\|\cdot\|_1)$ is a positive definite function on $\RR^d$. 

We will show that the $\ell^1$-invariant functions on the torus are all given by divided differences with
knots $\cos \t_1, \ldots, \cos \t_d$ and we will study the Fourier series of the $B$-spline $\t \mapsto 
M_{d-1}(u| \cos \t_1,\ldots, \cos \t_d)$, which is $\ell^1$-invariant. While the Fourier transform of the B-spline 
$x \mapsto M_{d-1}(u| x_1^2,\ldots, x_d^2)$ on $\RR^d$ satisfies an integral recursive relation in dimension $d$, the
Fourier coefficients of the $B$-spline $x \mapsto M_{d-1}(u| \cos \t_1,\ldots, \cos \t_d)$ on $\TT^d$ satisfy
a somewhat surprising biorthogonal relation with a family of polynomials. Let $m_{n,d}$ denote the Fourier 
coefficients of the $B$-spline function with the index $|\a| = n$. Then there is a sequence of polynomials
$h_{n,d}$, given in terms of the Gegenbauer polynomials, such that $\{m_{n,d}: n \in \NN_0\}$ and
$\{h_{n,d}: n \in \NN_0\}$ are biorthogonal in the sense that 
$$
\int_{-1}^1 m_{n,d}(u) h_{\ell,d}(u)\,\d u = \delta_{n,\ell}, \qquad n, \ell = 0 ,1, 2, \ldots.
$$
This orthogonal relation can be used to derive the Fourier orthogonal series of the B-spline function in the 
Gegenbauer polynomials explicitly; the first term of the series gives, in particular, that 
$$
  \frac{1}{(2\pi)^d} \int_{\TT^d} M_{d-1}(u| \cos \t_1,\ldots,\cos \t_d)\,\d \t = 
      \frac{\Gamma(\frac{d+1}2)}{\sqrt{\pi}\Gamma(\f{d}{2})(d-1)!} (1-u^2)_+^{\f{d-2}{2}},
$$
an identity that appears to be new. We will also give a characterization of $\ell^1$-invariant functions that are 
either positive definite or strictly positive definite on $\TT^d$. 

The paper is organized as follows. We recall the definition and basic properties of $\ell^1$-summability in the
next section and establish several necessary identities. The Fourier orthogonal series of the B-spline with 
respect to its knot is given in the third section. The positive definite functions of $\ell^1$-invariant functions are 
discussed in the fourth section. 

\section{$\ell^1$-summability on $\TT^d$}
\setcounter{equation}{0}

Let $f$ be a $2\pi$-periodic function defined on $\TT^d$. If $f\in L^2(\TT^d)$, then the Fourier series of $f$ is 
 defined by
$$
   f(x) = \sum_{\a \in \ZZ^d} \hat f_\a \e^{\i \a \cdot x},\; x\in\TT^d, \quad \hbox{with} 
       \quad \hat f_\a = \frac{1}{(2\pi)^d} \int_{\TT^d} f(y) \e^{\i \a \cdot y}\,\d y,\;\alpha\in\ZZ^d.
$$ 
We study the class of periodic functions that we call $\ell^1$-invariant. 

\begin{defn} \label{defn:l1-invariant}
A function $f: \TT^d \to  \RR$ is called $\ell^1$-invariant if 
$$
   \hat f_\a = \hat f_{\b} \quad \hbox{whevever $|\a| = |\b|$ for $\a, \b \in \ZZ^d$}. 
$$
We denote the Fourier coefficient $\hat f_\a$ of such a function by $\hat f_{|\a|}$. 
\end{defn}

If $f$ is $\ell^1$-invariant, then its Fourier series is of the form
\begin{equation} \label{eq:l1-function}
       f(x) = \sum_{n=0}^\infty \hat f_n E_n(x), \qquad E_n(x) = \sum_{|\a| = n} \e^{\i \a \cdot x}, \quad x\in\TT^d,
\end{equation}
where $|\a|$ is the $\ell^1$-norm, that is $|\a|:= |\a_1| + \cdots + |\a_d|$, of $\a \in \ZZ^d$. 

A function $f$ on $\TT^d$ is called $\ell^1$-summable if its partial sum $S_n^{(1)} f$ over the expanding 
$\ell^1$-ball, defined by 
$$
   S_n^{(1)} f(x) =  \sum_{|\a| \le n} \hat f_\a \e^{\i \a \cdot x},\qquad x\in\TT^d,
$$
converges to $f$. The partial sum can be written as an integral operator 
$$
 S_n^{(1)} f(x) = \frac{1}{(2\pi)^d} \int_{\TT^d} f(y) D_{n,d}(x-y)\,\d y = f*D_{n,d}(x), 
 $$
where the kernel $D_{n,d}$ is the analog of the Dirichlet kernel defined by 
$$
      D_{n,d}(y):= \sum_{|\a|\le n} \e^{\i \a \cdot y},\qquad y\in\TT^d.
$$
It is shown in \cite{BX1, X95} that the kernel $D_{n,d}$
can be written as a divided difference  
$$
  D_{n,d}(x) = [\cos x_1,\ldots,\cos x_d] G_{n,d},
$$
where $G_{n,d}$ is a univariant function defined by 
\begin{equation}\label{eq:Gnd=}
  G_{n,d}(\cos \t) =  (-1)^{\lfloor \frac{d-1}2 \rfloor} 2 \cos\tfrac\t 2(\sin\t)^{d-2}
\times \begin{cases} \cos(n+\tfrac 12)\t & \hbox{for $d$ even}, \\
      \sin(n+\tfrac12)\t & \hbox{for $d$ odd}. \end{cases}
\end{equation}

We briefly recall the notion of a divided difference of a function that is at least continuous. Let $f$ be a real or 
complex function on $\RR$, and let $m \in \NN_0$. The $m$-th divided difference of $f$ at the (pairwise distinct) 
knots, $x_0, x_1, \ldots, x_m$ in $\RR$ is defined inductively as 
$$
  [x_0]\,f = f(x_0) \quad\hbox{and}\quad [x_0,\ldots,x_m]\,f =
      \frac{[x_0,\ldots,x_{m-1}]\,f - [x_1,\ldots,x_m]\,f }{x_0 - x_m}.
$$
The divided difference is a symmetric function of the knots. The knots of the divided difference may coalesce. 
In particular, if all knots coalesce and if the function is sufficiently differentiable, then the divided difference collapses to 
\begin{equation}\label{eq:divder}
  [x_0,\ldots,x_m] f = \frac{f^{(m)}(x_0)}{m!} \quad \hbox{if $x_0 = x_1 = \cdots = x_m$}.
\end{equation}
Our analysis depends heavily on an
integral representation of the divided difference, for which we need the definition of B-spline. 
For $x_0 < \cdots < x_m$, the B-spline of order $m$ with knots $x_0, \ldots, x_m$ is defined by
$$
\RR \ni u \quad\to\quad M_m(u | x_0,\ldots,x_m) = [x_0,\ldots,x_m]\left \{ \frac{(\,\cdot\, - u)_+^{m-1}} {(m-1)!}\right\}.
$$
The B-spline vanishes outside the interval $(x_0,x_n)$ and it is strictly positive on the interval itself, and 
$$
     \int_\RR M_m(u|x_0,\ldots,x_m)\;\,\d u = \frac{1}{m!}.
$$
For better reference, we state the integral representation of the divided difference as a lemma. 

\begin{lem}
Let $f :\RR \to \CC$ be $m$-times continuously differentiable. Then
$$
[x_0,\ldots,x_m]\,f = \int_\RR f^{(m)}(u)M_m(u | x_0,\ldots,x_m)\, \d u.
$$
\end{lem}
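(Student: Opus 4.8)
The plan is to recognize this as the Peano kernel representation of the divided difference and to derive it from Taylor's theorem with integral remainder. The two ingredients are: first, that the order-$m$ divided difference $[x_0,\ldots,x_m]$ annihilates every polynomial of degree at most $m-1$; and second, that the remainder in Taylor's formula can be written with the truncated power $(\,\cdot\,-u)_+^{m-1}$, which is exactly the kernel appearing in the definition of $M_m$.

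First I would dispose of the polynomial fact. Since $[x_0,\ldots,x_m]f$ equals the coefficient of $x^m$ in the unique polynomial of degree at most $m$ that interpolates $f$ at the knots $x_0,\ldots,x_m$, it vanishes whenever $f$ is a polynomial of degree at most $m-1$, because then the interpolant is $f$ itself. (Alternatively, this follows by an easy induction from the defining recursion.) Next, by symmetry of the divided difference in its knots I may assume $x_0 < x_1 < \cdots < x_m$, and I fix real numbers $a < x_0$ and $b > x_m$. For $x\in[a,b]$, Taylor's theorem with integral remainder expanded about $a$ gives
$$
  f(x) = \sum_{k=0}^{m-1}\frac{f^{(k)}(a)}{k!}(x-a)^k + \int_a^b \frac{(x-u)_+^{m-1}}{(m-1)!}\,f^{(m)}(u)\,\d u,
$$
where the truncation $(x-u)_+$ converts the usual upper limit $x$ into the fixed limit $b$, since the integrand vanishes for $u>x$.

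Now I apply the functional $g \mapsto [x_0,\ldots,x_m]\,g$ in the variable $x$ to both sides. The first term is a polynomial of degree at most $m-1$ and is annihilated. For distinct knots the divided difference is a finite linear combination of point evaluations, so it commutes with the integral in $u$ (the kernel $(x-u)_+^{m-1}$ is jointly continuous and $f^{(m)}$ is continuous on the compact set $[a,b]$); this yields
$$
  [x_0,\ldots,x_m]\,f = \int_a^b f^{(m)}(u)\,[x_0,\ldots,x_m]\Big\{\tfrac{(\,\cdot\,-u)_+^{m-1}}{(m-1)!}\Big\}\,\d u
  = \int_a^b f^{(m)}(u)\,M_m(u\,|\,x_0,\ldots,x_m)\,\d u.
$$
Because $M_m$ is supported in $(x_0,x_m)\subset(a,b)$, the range of integration may be replaced by $\RR$, giving the claim for distinct knots.

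Finally, the general (coalescing) case follows by continuity: both sides are continuous functions of the knots $x_0,\ldots,x_m$ — the left side by the known continuous dependence of divided differences on their knots, the right side because $M_m$ depends continuously on its knots while $f^{(m)}$ stays bounded on compacta — so letting knots coalesce preserves the identity. I expect the only points demanding genuine care to be the interchange of the divided-difference functional with the integral and this limiting argument; the algebraic heart of the proof is entirely contained in the Taylor expansion.
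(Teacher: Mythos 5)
Your proposal is correct. Note, however, that the paper itself offers no proof of this lemma: it is stated purely ``for better reference'' as the classical Peano-kernel (Curry--Schoenberg) representation of the divided difference, so there is no argument in the paper to compare against. Your derivation --- Taylor's theorem with integral remainder written with the truncated power $(x-u)_+^{m-1}$, annihilation of the degree-$(m-1)$ polynomial part by the order-$m$ divided difference, and interchange of the divided-difference functional (a finite linear combination of point evaluations when the knots are distinct) with the integral in $u$ --- is exactly the standard proof, and every step is sound under the stated $C^m$ hypothesis. One small caveat: in your final paragraph the phrase ``$M_m$ depends continuously on its knots'' is not literally true pointwise (for example $M_1(u\,|\,x_0,x_1)=\chi_{(x_0,x_1)}(u)/(x_1-x_0)$ blows up as $x_1\to x_0$); what is true, and what your argument actually needs, is that the measures $M_m(u\,|\,\cdot)\,\d u$ converge weakly (with fixed total mass $1/m!$) as knots coalesce, so that the integral against the continuous function $f^{(m)}$ passes to the limit, consistently with the collapsed formula $f^{(m)}(x_0)/m!$ in \eqref{eq:divder}. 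Since the paper defines $M_m$ only for strictly increasing knots, the distinct-knot case you prove first already covers the lemma as stated, and the coalescence discussion is an optional refinement.
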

We shall also need the B-splines' recurrence relation (Powell, 1982)
\begin{align*}
M_{m+1}(u| x_0,\ldots,x_m,x_{m+1}) \, &  = \frac{(u-x_0)M_m(u | x_0,\ldots,x_m)}{x_{m+1}-x_0}\\
&+ \frac{(x_{m+1}-u)M_m(u | x_1,\ldots,x_{m+1})}{x_{m+1}-x_0}.
\end{align*}

We first write the function $E_n$ as an integral against the B-spline. We will need the Gegenbauer polynomials,
which are orthogonal polynomials with respect to the weight function
$$
  w_\l(t) = (1-t^2)^{\l-\f12}, \quad \l > -\tfrac 12,
$$
on the interval $[-1,1]$. The Gegenbauer polynomial of degree $n$ is denoted by $C_n^\l$ and normalized by
$C_n^\l(1) = \frac{(2\l)_n}{n!}$. The Gegenbauer polynomials satisfy the orthogonality 
$$
  c_\l \int_{-1}^1 C_n^\l(t) C_m^\l(t) w_\l(t) \,\d t =  \frac{\l}{n+\l} C_n^\l(1) \delta_{n,m}, \qquad n, m \in \NN_0,
$$
where $c_\l$ is a constant so that $c_\l \int_{-1}^1 w_\l(t) \,\d t = 1$ and $(a)_n = a(a+1)\cdots (a+n-1)$ 
denotes the Pochhammer symbol. For convenience, we also define
$$
    Z_n^\l(t):= \frac{n+\l}{\l} C_n^\l(t).
$$
The generating function of the Gegenbauer polynomials is given by 
$$
   \frac{1}{(1- 2r t + r^2)^{\l}} = \sum_{n=0}^\infty C_n^\l (t) r^n, \qquad 0 \le r < 1.
$$
Throughout this paper we define $C_n^\l(t) =0$ whenever $n < 0$. 

\begin{lem}
For $\theta = (\t_1,\ldots,\t_d)\in \TT^d$, the function $E_n$ satisfies 
\begin{align} \label{eq:En-hnd}
  E_n(\theta) \, &= [\cos \t_1, \ldots, \cos \t_d] H_{n,d} \\
       & = \int_{-1}^1 h_{n,d}(u) M_{d-1}(u | \cos \t_1,\ldots, \cos \t_d) \,\d u, \notag
\end{align}
where $H_{n,d}$ and $h_{n,d}$ are defined by 
\begin{align} \label{eq:Hnd=}
  H_{n,d}(\cos \t) =  2 (-1)^{\lfloor \f{d-1}{2}\rfloor} (\sin \t)^{d-1} \times 
      \begin{cases} - \sin (n \t) & \hbox{for $d$ even}, \\ \cos (n \t) & \hbox{for $d$ odd},  \end{cases}
\end{align}
and $h_{n,d}$ is a polynomial of degree $n$ given by 
\begin{align} \label{eq:hnd=}
   h_{n,d}(u) \,& = (d-1)! \sum_{j=0}^d (-1)^j \binom{d}{j} C_{n-2j}^d(u) \\
   & = (d-1)! \sum_{j=0}^{d-1}(-1)^j \binom{d-1}{j}Z_{n-2j}^{d-1}(u). \notag
\end{align}
\end{lem}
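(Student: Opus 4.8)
The plan is to establish the two displayed equalities by different devices. The first, $E_n=[\cos\t_1,\dots,\cos\t_d]H_{n,d}$, I would get by telescoping the known Dirichlet-kernel formula \eqref{eq:Gnd=}; the second (the integral representation, with $h_{n,d}$ in the Gegenbauer form \eqref{eq:hnd=}) I would get from a generating-function identity that conveniently disposes of all indices $n$ simultaneously.

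For the first equality, since $D_{n,d}=\sum_{k=0}^n E_k$ we have $E_n=D_{n,d}-D_{n-1,d}$ for $n\ge1$, and by linearity of the divided difference in its argument together with $D_{n,d}=[\cos\t_1,\dots,\cos\t_d]G_{n,d}$ it suffices to check $H_{n,d}=G_{n,d}-G_{n-1,d}$ as functions of $u=\cos\t$. Subtracting two copies of \eqref{eq:Gnd=} and using $\cos A-\cos B=-2\sin\tfrac{A+B}2\sin\tfrac{A-B}2$ ($d$ even) or $\sin A-\sin B=2\cos\tfrac{A+B}2\sin\tfrac{A-B}2$ ($d$ odd) with $A=(n+\tfrac12)\t$ and $B=(n-\tfrac12)\t$, the prefactor $2\cos\tfrac\t2(\sin\t)^{d-2}$ merges with the emerging $\sin\tfrac\t2$ via $2\sin\tfrac\t2\cos\tfrac\t2=\sin\t$ to produce $(\sin\t)^{d-1}$ times $-\sin n\t$ or $\cos n\t$, i.e.\ \eqref{eq:Hnd=}. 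I would flag that this telescoping is valid only for $n\ge1$: at $n=0$ one has $E_0=D_{0,d}=[\cos\t_1,\dots,\cos\t_d]G_{0,d}$, and $H_{0,d}$ need not equal $G_{0,d}$, so the middle expression is to be read with $n\ge1$ while the integral identity below stays correct at $n=0$.

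For the integral identity I would pass to generating functions in $r\in[0,1)$. Factoring the multi-index sum gives $\sum_{n\ge0}E_n(\t)r^n=\prod_{i=1}^d\bigl(1+2\sum_{k\ge1}r^k\cos k\t_i\bigr)=\prod_{i=1}^d\frac{1-r^2}{1-2r\cos\t_i+r^2}$. On the spline side, a short manipulation of the Gegenbauer generating function (apply $\tfrac1{d-1}(r\partial_r+(d-1))$) yields $\sum_{n\ge0}Z_n^{d-1}(u)r^n=\frac{1-r^2}{(1-2ur+r^2)^d}$; combining this with $\sum_{j}(-1)^j\binom{d-1}{j}r^{2j}=(1-r^2)^{d-1}$ (and the parallel computation with $\binom dj$, $C_{\cdot}^d$, which simultaneously shows the two forms of \eqref{eq:hnd=} agree and that $\deg h_{n,d}=n$) gives $\sum_{n\ge0}h_{n,d}(u)r^n=(d-1)!\bigl(\frac{1-r^2}{1-2ur+r^2}\bigr)^d$.

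It then remains to match these two series after integrating $h_{n,d}$ against the B-spline, and this is where the main work lies. Writing $g_r(u)=(1-2ur+r^2)^{-1}$, the elementary identity $\partial_u^{\,d-1}g_r=(2r)^{d-1}(d-1)!\,g_r^{\,d}$ together with the Peano-kernel lemma turns $(d-1)!\int_{-1}^1 g_r(u)^d M_{d-1}(u\mid\cos\t_1,\dots,\cos\t_d)\,\d u$ into $(2r)^{-(d-1)}[\cos\t_1,\dots,\cos\t_d]g_r$. The key observation is that $g_r$ is a single pole in $u$, $g_r(u)=\tfrac1{2r}(a-u)^{-1}$ with $a=\tfrac{1+r^2}{2r}$, so the partial-fraction value $[x_0,\dots,x_{d-1}](a-u)^{-1}=\prod_i(a-x_i)^{-1}$ gives $[\cos\t_1,\dots,\cos\t_d]g_r=(2r)^{d-1}\prod_{i=1}^d(1-2r\cos\t_i+r^2)^{-1}$, hence $(d-1)!\int_{-1}^1 g_r^d M_{d-1}\,\d u=\prod_{i=1}^d(1-2r\cos\t_i+r^2)^{-1}$. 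Reinstating the factor $(1-r^2)^d$ carried by the generating function of $h_{n,d}$ reproduces $\prod_i\frac{1-r^2}{1-2r\cos\t_i+r^2}=\sum_n E_n(\t)r^n$; interchanging sum and integral (legitimate since $M_{d-1}$ is bounded with compact support and the series converges uniformly on compact $r$-sets) and comparing coefficients of $r^n$ gives the integral identity for every $n\ge0$. Combining the two proven equalities then yields the middle-to-right equality for $n\ge1$ for free—equivalently $H_{n,d}^{(d-1)}=h_{n,d}$—so no direct $(d-1)$-fold differentiation of $H_{n,d}$ is needed. The principal obstacle is exactly this divided-difference-of-generating-function computation (the partial-fraction factorization and its coupling to $g_r^{\,d}$ through $\partial_u^{\,d-1}$ and the Peano lemma); everything else is bookkeeping, with the isolated index $n=0$ the only point requiring separate care.
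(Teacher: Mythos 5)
Your proof is correct, and its first half (telescoping $E_n=D_{n,d}-D_{n-1,d}$, hence $H_{n,d}=G_{n,d}-G_{n-1,d}$, verified by the sum-to-product identities) is exactly the paper's argument; you are in fact more careful than the paper in noting that this degenerates at $n=0$, where indeed $[\cos\theta_1,\ldots,\cos\theta_d]H_{0,d}\ne E_0$ (it is $0$ for $d$ even and $2$ for $d$ odd, while $E_0=1$), so the divided-difference form must be read with $n\ge1$ even though the integral form survives. Where you genuinely diverge is in establishing the Gegenbauer expression for $h_{n,d}$ and the integral representation. The paper imports from \cite{BX1} the formula $G_{n,d}^{(d-1)}=f_{n,d}+f_{n-1,d}$ with $f_{n,d}=(d-1)!\sum_j(-1)^j\binom{d-1}{j}C^{d}_{n-2j}$, deduces $h_{n,d}=H_{n,d}^{(d-1)}=f_{n,d}-f_{n-2,d}$, and then rewrites this using the contiguous relation $(n+\lambda)C_n^\lambda=\lambda[C_n^{\lambda+1}-C_{n-2}^{\lambda+1}]$ and Pascal's rule, the integral form being just the Peano-kernel lemma. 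You instead take \eqref{eq:hnd=} as the definition, compute the generating function $\sum_n h_{n,d}(u)r^n=(d-1)!(1-r^2)^d(1-2ur+r^2)^{-d}$ (the paper's later identity \eqref{eq:generat-hnd}), and prove
$$
(d-1)!\int_{-1}^1\frac{M_{d-1}(u\,|\,\cos\theta_1,\ldots,\cos\theta_d)}{(1-2ru+r^2)^{d}}\,\mathrm{d}u=\prod_{i=1}^{d}\frac{1}{1-2r\cos\theta_i+r^2}
$$
directly, by observing that $P_r$ has a single pole in $u$ and using $[x_1,\ldots,x_d](a-\cdot)^{-1}=\prod_i(a-x_i)^{-1}$. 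That is precisely the paper's Proposition \eqref{eq: Md-Poisson}--\eqref{eq: diff-Poisson}, but with the logical arrows reversed: the paper deduces that Proposition \emph{from} the present Lemma, whereas you prove it independently and then obtain the Lemma by matching coefficients of $r^n$ against $\sum_n E_n(\theta)r^n=\prod_i\frac{1-r^2}{1-2r\cos\theta_i+r^2}$. Your route buys self-containedness (no appeal to the derivative formula from \cite{BX1}) and a free verification that the two sums in \eqref{eq:hnd=} coincide (identical generating functions), at the modest cost of justifying the sum--integral interchange, which you address. The one caution is organizational: since the paper's Proposition is downstream of this Lemma, you must keep your direct partial-fraction proof of the displayed identity rather than citing the Proposition, or the argument becomes circular.
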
 

\begin{proof}
By its definition, $E_n(\t) = D_n(\t) - D_{n-1}(\t)$, so that $E_n(\t)$ is a divided difference of 
$H_{n,d} = G_{n,d} - G_{n-1,d}$, from which \eqref{eq:En-hnd} follows readily with $h_{n,d} = H_{n,d}^{(d-1)}$. 
Moreover, the identity \eqref{eq:Hnd=} is an immediate consequence of \eqref{eq:Gnd=} and elementary 
trigonometric identities. Now, it is shown in \cite{BX1} that  
$$
   g_{n,d}(t) = G_{n,d}^{(d-1)}(t) = f_{n,d}(t) + f_{n-1,d}(t), 
$$
where $f_{n,d}$ is given in terms of the Gegenbauer polynomials by 
$$
  f_{n,d} (t) = (d-1)! \sum_{j=0}^{d-1} (-1)^j \binom{d-1}{j} C_{n-2j}^{d}(t).
$$
Using the relation \cite[(4.7.29)]{Sz} 
$$
   (n+\l) C_n^{\l}(t) = \l \left[ C_n^{\l+1}(t) - C_{n-2}^{\l+1}(t) \right]
$$
with $\l = d-1$, we then obtain 
$$
  h_{n,d} = f_{n,d} - f_{n-2,d} = (d-1)!\sum_{j=0}^d (-1)^j  \binom{d-1}{j} \left[C_{n-2j}^d(t) - C_{n-2j-2}^{d}(t)\right] 
$$
which is the second expression of $h_{n,d}$ in \eqref{eq:hnd=} by recursion with $Z_n^d$. Furthermore, the first identity in \eqref{eq:hnd=}
follows from $\binom{d-1}{j} + \binom{d-1}{j-1} = \binom{d}{j}$ and
$$
     h_{n,d} = f_{n,d} - f_{n-2,d} = (d-1)!\sum_{j=0}^d (-1)^j  \left( \binom{d-1}{j} + \binom{d-1}{j-1}\right) C_{n-2j}^d,
$$
where we define for convenience $\binom{d-1}{m} = 0$ if $m = -1$ or $m = d$. 
\end{proof}

Let $N_d(n) = \# \{\a \in \NN_0^d: |\a| =n\}$ be the cardinality of the set $\{\a: |\a| =n\}$. Then, 
$N_{d}(n) =  E_n(0)$. As a consequence of the identities \eqref{eq:En-hnd} and \eqref{eq:hnd=}, we obtain
$$
    N_d(n) = E_n(0) = \frac{h_{n,d}(1)}{(d-1)!} = \sum_{j=0}^d \frac{(-d)_j (2d)_{n-2j}}{j! (n-2j)!},
$$ 
where $(a)_n = a(a+1)\cdots(a+n-1)$ is the Pochhammer symbol. 
The last sum can be written as a hypergeometric ${}_3F_2$ function evaluated at $1$, 
but the series is 
not balanced so it does not have a closed-form formula. The first values of $N(n,d)$ are given below
$$
  N_2(n) = 4n, \quad N_3(n) =  2 n^2+1, \quad N_4(n) =  \frac4 9 n (n^2+2).
$$

The function $h_{n,d}$ satisfies a generating function identity which we state as the following

\begin{lem}
Let $0 \le r < 1$. Then
\begin{equation}\label{eq:generat-hnd}
  (d-1)! \frac{(1-r^2)^d}{(1-2 r u + r^2)^d}  = \sum_{n= 0}^\infty h_{n,d}(u) r^n. 
\end{equation}
\end{lem}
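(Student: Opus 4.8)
The plan is to prove the generating function identity \eqref{eq:generat-hnd} by summing the second expression for $h_{n,d}$ in \eqref{eq:hnd=} against $r^n$ and recognizing the result through the Gegenbauer generating function. First I would substitute
$$
   h_{n,d}(u) = (d-1)! \sum_{j=0}^{d-1} (-1)^j \binom{d-1}{j} Z_{n-2j}^{d-1}(u)
$$
into $\sum_{n\ge 0} h_{n,d}(u) r^n$ and interchange the two (absolutely convergent, for fixed $0\le r<1$) summations. Since $Z_m^{d-1} = 0$ for $m<0$ by the convention that $C_m^\l = 0$ when $m<0$, the inner sum over $n$ can be reindexed via $n = m + 2j$, producing a clean factorization
$$
   \sum_{n=0}^\infty h_{n,d}(u)\, r^n = (d-1)! \left( \sum_{j=0}^{d-1} (-1)^j \binom{d-1}{j} r^{2j} \right) \left( \sum_{m=0}^\infty Z_m^{d-1}(u)\, r^m \right).
$$
The first factor is just the binomial expansion $(1-r^2)^{d-1}$, which disposes of the $j$-sum immediately.

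Next I would evaluate the $Z$-series. Using the definition $Z_m^\l(t) = \frac{m+\l}{\l} C_m^\l(t)$ and the standard Gegenbauer generating function with $\l = d-1$, one has
$$
   \sum_{m=0}^\infty Z_m^{d-1}(u)\, r^m = \frac{1}{d-1}\sum_{m=0}^\infty (m+d-1) C_m^{d-1}(u)\, r^m = \frac{1}{d-1}\left( r \frac{\partial}{\partial r} + (d-1)\right)\sum_{m=0}^\infty C_m^{d-1}(u)\, r^m.
$$
Applying the operator $r\partial_r + (d-1)$ to $(1-2ru+r^2)^{-(d-1)}$ and simplifying the resulting rational expression should collapse to $(1-r^2)(1-2ru+r^2)^{-d}$; combining this with the factor $(1-r^2)^{d-1}$ from the first step yields exactly the left-hand side of \eqref{eq:generat-hnd}.

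The main obstacle I anticipate is the differentiation-and-simplification step for the $Z$-series: one must verify carefully that
$$
   \left(r \frac{\partial}{\partial r} + (d-1)\right)(1-2ru+r^2)^{-(d-1)} = (d-1)\,\frac{1-r^2}{(1-2ru+r^2)^d},
$$
since $\partial_r(1-2ru+r^2)^{-(d-1)} = -(d-1)(2r-2u)(1-2ru+r^2)^{-d}$ and the numerators must be recombined correctly; an alternative that avoids differentiation is to establish the companion identity $\sum_m (m+\l) C_m^\l(u) r^m = \l(1-r^2)(1-2ru+r^2)^{-\l-1}$ directly from the recurrence $(m+\l)C_m^\l = \l[C_m^{\l+1} - C_{m-2}^{\l+1}]$ quoted in the previous proof, which telescopes the series against the generating function for $C_m^{\l+1}$ and gives the factor $(1-r^2)$ with essentially no calculus. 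Either route is routine once set up, so the substance of the proof lies entirely in the reindexing and the recognition of the two closed forms.
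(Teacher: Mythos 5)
Your proposal is correct, and it follows the same overall strategy as the paper (substitute an explicit expansion of $h_{n,d}$, interchange the sums, reindex using the convention $C^\l_m=0$ for $m<0$, and invoke the Gegenbauer generating function). The one substantive difference is your choice of which formula in \eqref{eq:hnd=} to expand. The paper uses the first expression, $h_{n,d}=(d-1)!\sum_{j=0}^d(-1)^j\binom{d}{j}C^d_{n-2j}$, so the binomial sum immediately produces the full factor $(1-r^2)^d$ and the generating function $\sum_m C^d_m(u)r^m=(1-2ru+r^2)^{-d}$ supplies the denominator with no further work; the whole proof is a four-line chain of equalities. You instead use the second expression with $Z^{d-1}_{n-2j}$, which yields only $(1-r^2)^{d-1}$ from the $j$-sum and forces you to establish the auxiliary identity $\sum_m Z^{d-1}_m(u)r^m=(1-r^2)(1-2ru+r^2)^{-d}$ to recover the missing factor. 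Both of your suggested routes to that identity check out: the operator $r\partial_r+(d-1)$ applied to $(1-2ru+r^2)^{-(d-1)}$ does simplify to $(d-1)(1-r^2)(1-2ru+r^2)^{-d}$, and the telescoping argument via $(m+\l)C^\l_m=\l[C^{\l+1}_m-C^{\l+1}_{m-2}]$ gives the same thing without calculus (and is, in effect, a generating-function restatement of how the paper derives the second formula from the first). So your argument buys nothing extra but costs one additional lemma; had you started from the first expression in \eqref{eq:hnd=}, the extra step would disappear entirely.
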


\begin{proof}
By the explicit formula of $h_{n,d}$, we obtain
\begin{align*}
  \frac{1}{(d-1)!} \sum_{n=0}^\infty h_{n,d}(u) r^n 
       \, & = \sum_{n=0}^\infty  \sum_{j=0}^d (-1)^j \binom{d}{j} C_{n-2j}^d(u) r^n \\
       & =   \sum_{j=0}^d (-1)^j\binom{d}{j} \sum_{n=2j}^\infty C_{n-2j}^d(u) r^{n-2j} r^{2j} \\
       & =   \sum_{j=0}^d (-1)^j \binom{d}{j} r^{2j} \sum_{n= 0}^\infty C_n^d(u) r^n \\
       & = (1-r^2)^d \frac{1}{(1-2 r u+r^2)^d},
\end{align*}
where we have used the generating function of the Gegenbauer polynomials. 
\end{proof}

Our next result is of interest in itself, which gives an explicit formula for the divided difference of the function 
$$
     P_r(t): = \frac{1}{1-2 r t +r^2}, \quad 0 \le r < 1, \quad t \in [-1,1].
$$

\begin{prop}
For $0 \le r < 1$, 
\begin{align} \label{eq: Md-Poisson}
 (d-1)! \int_{-1}^1  \frac{1}{(1-2 r u + r^2)^d} & M_{d-1}(u | \cos \t_1,\ldots,\cos \t_d) \,\d u  \\
      & = \frac{1}{\prod_{i=1}^d (1-2 r \cos \t_i + r^2)}.  \notag
\end{align}
In particular, 
\begin{equation} \label{eq: diff-Poisson}
   [\cos \t_1, \ldots, \cos \t_d] P_r =   \frac{(2r)^{d-1}}{\prod_{i=1}^d (1-2 r \cos \t_i + r^2)}.
\end{equation}
\end{prop}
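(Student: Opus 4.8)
The plan is to establish the integral identity \eqref{eq: Md-Poisson} first, as a generating-function consequence of the two preceding lemmas, and then read off \eqref{eq: diff-Poisson} by applying the Peano-kernel representation of the divided difference directly to $P_r$. For \eqref{eq: Md-Poisson} I would multiply the generating function \eqref{eq:generat-hnd} by $M_{d-1}(u|\cos\t_1,\ldots,\cos\t_d)$ and integrate in $u$ over $[-1,1]$. For $0\le r<1$ and $u\in[-1,1]$ one has $1-2ru+r^2=(1-r)^2+2r(1-u)\ge(1-r)^2>0$, so the kernel is smooth and bounded on the compact support of the B-spline; moreover, since $|C_m^d(u)|\le C_m^d(1)$ on $[-1,1]$, the coefficients $h_{n,d}(u)$ are bounded uniformly in $u$ by a polynomial in $n$, and hence $\sum_n h_{n,d}(u)r^n$ converges uniformly there. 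This justifies interchanging summation and integration. Recognizing $\int_{-1}^1 h_{n,d}(u)M_{d-1}(u|\cos\t_1,\ldots,\cos\t_d)\,\d u=E_n(\t)$ by \eqref{eq:En-hnd}, the left-hand side becomes $\sum_{n\ge0}E_n(\t)r^n$, which factorizes across coordinates:
$$
\sum_{n=0}^\infty E_n(\t)r^n=\sum_{\a\in\ZZ^d}r^{|\a|}\e^{\i\a\cdot\t}=\prod_{i=1}^d\Big(\sum_{k\in\ZZ}r^{|k|}\e^{\i k\t_i}\Big)=\prod_{i=1}^d\frac{1-r^2}{1-2r\cos\t_i+r^2},
$$
where the innermost sum is the classical Poisson kernel. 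Equating this with the right-hand side of \eqref{eq:generat-hnd} integrated against $M_{d-1}$ gives $(d-1)!\int_{-1}^1(1-r^2)^d(1-2ru+r^2)^{-d}M_{d-1}\,\d u=(1-r^2)^d\prod_{i}(1-2r\cos\t_i+r^2)^{-1}$; cancelling the common factor $(1-r^2)^d$ yields \eqref{eq: Md-Poisson}.

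For the ``in particular'' statement, I would apply the integral representation of the divided difference (the lemma with $m=d-1$) to $F=P_r$, which is infinitely differentiable on $[-1,1]$ since the denominator stays positive. A one-line induction gives $P_r^{(k)}(u)=(2r)^k k!\,(1-2ru+r^2)^{-(k+1)}$, so $P_r^{(d-1)}(u)=(2r)^{d-1}(d-1)!\,(1-2ru+r^2)^{-d}$. Hence
$$
[\cos\t_1,\ldots,\cos\t_d]P_r=\int_{-1}^1 P_r^{(d-1)}(u)\,M_{d-1}(u|\cos\t_1,\ldots,\cos\t_d)\,\d u=(2r)^{d-1}(d-1)!\int_{-1}^1 \frac{M_{d-1}(u|\cos\t_1,\ldots,\cos\t_d)}{(1-2ru+r^2)^d}\,\d u,
$$
and substituting \eqref{eq: Md-Poisson} collapses the right-hand side to $(2r)^{d-1}\prod_{i=1}^d(1-2r\cos\t_i+r^2)^{-1}$, which is exactly \eqref{eq: diff-Poisson}.

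The only genuinely technical point, and hence the main obstacle, is the justification of the term-by-term integration, i.e. the uniform convergence of $\sum_n h_{n,d}(u)r^n$ on $[-1,1]$; once the bound $|C_m^d(u)|\le C_m^d(1)$ is invoked this is routine. A secondary, purely formal point is that the knots $\cos\t_i$ may coalesce: since both sides of each identity are continuous (indeed real-analytic) in $\t$, and the B-spline and divided difference extend continuously through coinciding knots, the identities extend from the distinct-knot case by continuity.
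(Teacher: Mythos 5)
Your proof is correct and follows essentially the same route as the paper: both arguments combine the Poisson-kernel product $\prod_i(1-r^2)/(1-2r\cos\t_i+r^2)=\sum_n E_n(\t)r^n$, the representation \eqref{eq:En-hnd} of $E_n$ as an integral against the B-spline, and the generating function \eqref{eq:generat-hnd}, and then obtain \eqref{eq: diff-Poisson} by computing $P_r^{(d-1)}$ and invoking the Peano-kernel formula. You merely run the first chain of identities in the opposite direction and supply the convergence justification that the paper leaves implicit.
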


\begin{proof}
We start with the elementary identity 
$$
   \frac{1-r^2}{1-2r \cos \phi+r^2} = \sum_{n=0}^\infty r^{n} \e^{\i n \phi}. 
$$
Reorganizing the $d$-fold product of this identity and setting $\t = (\t_1,\ldots, \t_d)$ as above, we obtain the equalities
\begin{align} \label{eq:prod-poisson}
   \frac{(1-r^2)^d} {\prod_{i=1}^d (1-2 r \cos \t_i + r^2)} \,& = \sum_{n=0}^\infty r^n \sum_{|\a| = n} \e^{\i \a \cdot \t} \\
       & = \sum_{n=0}^\infty r^n E_n(\t) =  \sum_{n=0}^\infty r^n [\cos \t_1, \ldots, \cos \t_d] H_{n,d} \notag \\
       & = \sum_{n=0}^\infty r^n  \int_{-1}^1 h_{n,d}(u) M_{d-1} (u |\cos \t_1,\ldots, \cos \t_d) \,\d u, \notag
\end{align}
from which the identity \eqref{eq: Md-Poisson} follows from the generating function of $h_{n,d}$. Now, it is easy to verify that 
$$
   P_r^{(d-1)} (u) = \frac{(d-1)! (2r)^{d-1}}{(1- 2 r u + r^2)^d},
$$
so that the left-hand side of \eqref{eq: Md-Poisson} can be identified with the divided difference of $P_r$, 
which gives \eqref{eq: diff-Poisson}.
\end{proof} 

\section{Fourier series of B-spline with respect to its knots} 
\setcounter{equation}{0}

As a function of its knots, the B-spline function is a periodic function on $\TT^d$,  
$$
\TT^d \ni \t \mapsto M_{d-1}(u| \cos \t_1,\ldots,\cos\t_d) \in \RR,
$$
for each $u$, and we also define for convenience 
$$
     \CM_d(\a; \t) := M_{d-1}(\cos \alpha | \cos \t_1,\ldots,\cos\t_d). 
$$   
Studying this case is sufficient since for  $|u| \ge 1$,  $ M_{d-1}(u| \cos \t_1,\ldots,\cos\t_d) = 0$ by definition, for all $\t \in \TT^d$. We first show that it is an
integrable function on $\TT^d$. 

\begin{prop} 
For $u \in (-1,1)$, the function $\t \mapsto M_{d-1}(u| \cos \t_1,\ldots,\cos\t_d)$ is in $L^1(\TT^d)$. 
\end{prop}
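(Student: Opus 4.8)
Fix $u\in(-1,1)$ and set $F(u)=\int_{\TT^d}M_{d-1}(u\,|\,\cos\theta_1,\ldots,\cos\theta_d)\,\d\theta$; since the integrand is nonnegative and continuous in $\theta$, this is a well-defined element of $[0,\infty]$, and the claim is that $F(u)<\infty$ (I take $d\ge 2$, the case $d=1$ being degenerate). The plan is to bound the B-spline pointwise by a quantity depending only on the spread of its knots, and then to integrate that bound against the arcsine weights produced by the substitution $t_i=\cos\theta_i$, using the fact that the B-spline is supported on $[\min_i t_i,\max_i t_i]$ to control the resulting singularity.

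The first step is a uniform bound. From the divided-difference definition one reads off the translation invariance $M_{d-1}(u-c\,|\,x-c)=M_{d-1}(u\,|\,x)$ and the scaling rule $M_{d-1}(au\,|\,ax)=a^{-1}M_{d-1}(u\,|\,x)$ for $a>0$ (equivalently, $(d-1)!\,M_{d-1}(\cdot\,|\,x)$ is the density of $\sum_i\lambda_i x_i$ for $\lambda$ uniform on the simplex, which makes both invariances and the continuity of $M_{d-1}$ in its knots transparent). Writing $a=\min_i x_i$ and $L=\max_i x_i-\min_i x_i$, these give on the support
$$
M_{d-1}(u\,|\,x_1,\ldots,x_d)=\frac1L\,M_{d-1}\!\left(\tfrac{u-a}{L}\,\Big|\,\tfrac{x_1-a}{L},\ldots,\tfrac{x_d-a}{L}\right),
$$
a configuration with smallest knot $0$, largest knot $1$, and argument in $[0,1]$. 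The set of such normalized configurations is compact and every one of them has spread exactly $1$, so no total coalescence occurs and $M_{d-1}$ remains finite and jointly continuous there (for $d=2$ one simply has $M_1(\cdot\,|\,0,1)=\mathbf{1}_{[0,1]}$). Hence there is a constant $C_d<\infty$ with
$$
0\le M_{d-1}(u\,|\,x_1,\ldots,x_d)\le \frac{C_d}{\max_i x_i-\min_i x_i}\,\mathbf{1}\!\left[\min_i x_i\le u\le\max_i x_i\right].
$$
Establishing this uniform bound, in particular the joint continuity of $M_{d-1}$ at coalescing knots, is the main technical point.

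Taking $x_i=\cos\theta_i$ and substituting $t_i=\cos\theta_i$ (so that $\d\theta$ becomes $2^d\prod_i(1-t_i^2)^{-1/2}\,\d t_i$), the bound reduces the claim to the finiteness of
$$
\int_{(-1,1)^d}\frac{\mathbf{1}\!\left[\min_i t_i\le u\le\max_i t_i\right]}{\max_i t_i-\min_i t_i}\,\prod_{i=1}^d\frac{\d t_i}{\sqrt{1-t_i^2}}.
$$
I would split $(-1,1)^d$ into a small cube $A=\{\,|t_i-u|<\eta\ \text{for all }i\,\}$, with $\eta<1-|u|$ so that $A$ stays away from the faces $t_i=\pm1$, and its complement $B$. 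The spanning indicator is the crucial device: if $\max_i t_i-\min_i t_i<\eta$ then all $t_i$ lie in an interval of length $<\eta$ containing $u$, so $t\in A$. Thus on $B$ we have $\max_i t_i-\min_i t_i\ge\eta$, and since each weight $(1-t_i^2)^{-1/2}$ is integrable on $(-1,1)$, the contribution of $B$ is at most a constant multiple of $\eta^{-1}$.

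On $A$ the weights are bounded, so only $\int_A(\max_i t_i-\min_i t_i)^{-1}\,\d t$ under the spanning constraint remains. Putting $s=t-u\mathbf{1}$, the constraint becomes $\min_i s_i\le0\le\max_i s_i$, which forces $\max_i s_i-\min_i s_i\ge\|s\|_\infty\ge d^{-1/2}|s|$; the integrand is therefore dominated by $\sqrt d\,|s|^{-1}$, an isolated point singularity at the origin that is integrable over $\RR^d$ precisely because $d\ge2$. Combining the two regions gives $F(u)<\infty$. The two places where I expect to spend real effort are the uniform pointwise bound with its continuity-at-coalescence justification, and this last observation that the support constraint collapses the spread-singularity to a single point, so that the remaining integral already converges for $d\ge2$.
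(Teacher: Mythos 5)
Your overall strategy is sound and genuinely different from the one used in the paper. The paper reduces to the ordered simplex $\triangle_d$ by symmetry, verifies integrability by hand for $d=2$ (bounding $1/(\cos\t_2-\cos\t_1)$ by a multiple of $1/(\t_1-\t_2)$), and then climbs in dimension by induction using the B-spline recurrence, which bounds $\CM_{d+1}$ by a sum of two copies of $\CM_d$. You instead work in all dimensions at once: the pointwise bound $M_{d-1}(u|x)\le C_d\,(\max_i x_i-\min_i x_i)^{-1}\mathbf{1}[\min_i x_i\le u\le\max_i x_i]$ plus the observation that the support constraint forces the spread to be at least $\|t-u\mathbf{1}\|_\infty$, so the singularity collapses to a single point where $|s|^{-1}$ is locally integrable for $d\ge 2$. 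Your second step (the splitting into $A$ and $B$, the treatment of the arcsine weights, and the estimate $\max_i t_i-\min_i t_i\ge d^{-1/2}|t-u\mathbf{1}|$ on the support) is correct and complete; it is arguably more transparent than the induction, and it quantifies where the integrability comes from.

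The one genuine weak point is your justification of the uniform bound itself. After normalizing to $\min_i x_i=0$, $\max_i x_i=1$, the map $(u,x)\mapsto M_{d-1}(u|x)$ is \emph{not} jointly continuous on the compact set of normalized configurations: when $d-1$ knots coalesce the spline loses all interior smoothness. For instance, $M_2(u|0,0,1)=(1-u)$ for $0<u<1$ and $=0$ for $u\le 0$, so there is a jump at $(u,x)=(0,(0,0,1))$, and the same phenomenon occurs for every $d\ge 3$ at knots of multiplicity $d-1$. Hence compactness plus continuity does not deliver the constant $C_d$ as stated. The bound you want is nevertheless true --- it is the classical statement that the normalized B-spline $(\max_i x_i-\min_i x_i)\,(d-2)!\,M_{d-1}(\cdot|x)$ lies between $0$ and $1$, giving $C_d=1/(d-2)!$ --- and it can be proved by induction on the order directly from the recurrence relation quoted in Section 2 (the two coefficients there are nonnegative and sum to $1$ on the support), or simply cited from Powell's book. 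With that replacement for the continuity argument, your proof is correct; without it, the key lemma is asserted rather than proved.
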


\begin{proof}
Let $u = \cos \a$ for $0 < \a < \pi$ be fixed. Since the function $\CM_d(\a;\cdot)$ is obvious even in each of 
its variables, we only need to consider $\t \in [0, \pi]^d$. Furthermore, the divided difference is a symmetric 
function of its knots, the function $\CM_{d}(\a;\cdot)$ is a symmetric function and it is nonnegative, so 
we only need to show that it is an $L^1$ function on the domain
$$
 \triangle_d = \{ \t = (\t_1,\ldots,\t_d) \in \TT^d:  0 \le \t_d \le \t_{d-1}\le \cdots \le \t_1 \le \pi\}.
$$
Indeed, the above consideration leads readily to 
$$
  \int_{\TT^d} M_{d-1}(u| \cos \t_1,\ldots,\cos\t_d)\,\d \t = 2^d d!  \int_{\triangle_d} 
     M_{d-1}(u| \cos \t_1,\ldots,\cos\t_d)\,\d \t.
$$

We start with the case $d=2$; the univariate case is trivial by continuity and
 compact support. On the domain $\triangle_2$, the function is given by
$$
  \CM_{2}(\a; \t) = \frac{\chi_{[\t_1,\t_2]} (\a)}{\cos \t_2 - \cos \t_1}
        = \begin{cases} 0 & \a \le \t_2 \\  \frac{1}{\cos \t_2-\cos\t_1} & \t_2 < \a <  \t_1 \\ 0 & \a \ge \t_1
      \end{cases}.
$$ 
Hence, it follows readily that 
\begin{align*}
  \int_{\triangle_2} \CM_{2}(\a; \t) \,\d \t \, & = \int_\a^\pi \int_0^\a  \frac{1}{\cos \t_2-\cos\t_1} \,\d \t_2 \,\d \t_1 \\
    & =  \int_\a^\pi \int_0^\a  \frac{1}{2 \sin \frac{\t_1-\t_2}2 \sin \frac{\t_1+\t_2}2 } \,\d \t_2 \,\d \t_1 \\
    & \le \frac{\pi}{\min\{\sin \f{\a}{2}, \cos \f{\a}{2}\}} \int_\a^\pi \int_0^\a \frac{1}{\t_1 - \t_2} \,\d \t_2 \,\d \t_1
\end{align*}
by elementary trigonometric inequalities. The last integral is evidently bounded for $0 < \a < \pi$, so that
$\CM_{1}(\a; \t) \in L^1(\TT^2)$ for $0 < \a < \pi$. 

For $d > 2$, we use induction on $d$ and the already stated recurrence relation from above for B-splines. Since 
\begin{align*} 
 \CM_{d+1}( \alpha| \t) =\, &\frac{\cos \alpha-\cos \t_1}{\cos \t_{d+1}-\cos \t_1}\CM_d(\alpha |\t_1,\ldots,\t_d) \\
       & +\frac{\cos \t_{d+1}-\cos \alpha}{\cos \t_{d+1}-\cos \t_1}\CM_{d}(\alpha |  \t_2,\ldots, \t_{d+1})
\end{align*}
and $\CM_{d+1}( \alpha| \t) =0$ if $\alpha \not \in (\t_1,\t_{d+1})$, it follows that for $\alpha \in [0,\pi]$ and 
$\theta \in \triangle_d$,
$$
\CM_{d+1}( \alpha| \t)\leq \CM_d(\alpha |\t_1,\ldots,\t_d)+\CM_{d}(\alpha |  \t_2,\ldots, \t_{d+1}).
$$
Consequently, the integrability of $\CM_{d+1}(\a | \t)$ follows eventually from the case $d=1$ by induction.
\end{proof}

Since $\CM_d(\a;\cdot)$ is a nonnegative integrable function, we can expand it into multiple Fourier series,
which leads us to consider the Fourier coefficients of the B-spline function as a function of its knots. More 
interestingly, we consider the $\ell^1$-sum of its Fourier coefficients. 

\begin{defn}
For $d \ge 2$ and $n \in \NN_0$, we define
$$
   m_{n,d} (u):= \frac{1}{(2\pi)^d} \int_{\TT^d} M_{d-1} (u | \cos \t_1,\ldots,\cos\t_d) \frac{E_n(\t)}{N_d(n)} \,\d \t. 
$$
\end{defn}

By the definition of $E_n(\t)$, $m_{n,d}$ is the $\ell^1$ mean of the Fourier transform of the B-spline function
$\t \mapsto M_{d-1} (u | \cos \t_1,\ldots,\cos\t_d)$ with respect to its knots. 

\begin{thm}
The family of functions $\{m_{n,d}: n \in \NN_0\}$ and the family $\{h_{n,d}: n \in \NN_0\}$ are biorthogonal; 
more precisely, 
\begin{equation} \label{eq:biortho}
   \int_{-1}^1 m_{n,d}(u) h_{n',d}(u) \,\d u =   \delta_{n,n'}, \qquad n, n' \in \NN_0.
\end{equation}
\end{thm}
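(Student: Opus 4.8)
The plan is to establish the biorthogonality relation \eqref{eq:biortho} by computing the integral $\int_{-1}^1 m_{n,d}(u) h_{n',d}(u)\,\d u$ directly from the definition of $m_{n,d}$, exploiting the two integral representations already developed in the excerpt. First I would substitute the defining formula for $m_{n,d}(u)$ and interchange the order of integration (justified by the $L^1$-integrability established in the preceding proposition together with the fact that $h_{n',d}$ is a polynomial, hence bounded on $[-1,1]$), obtaining
\begin{equation*}
 \int_{-1}^1 m_{n,d}(u) h_{n',d}(u)\,\d u = \frac{1}{(2\pi)^d N_d(n)} \int_{\TT^d} E_n(\t) \left(\int_{-1}^1 h_{n',d}(u) M_{d-1}(u| \cos \t_1,\ldots,\cos\t_d)\,\d u \right) \d \t.
\end{equation*}
The key observation is that the inner integral over $u$ is exactly the right-hand side of \eqref{eq:En-hnd} in the second lemma with $n$ replaced by $n'$; that is, it equals $E_{n'}(\t)$. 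This collapses the whole expression to a single integral over the torus.

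With this reduction in hand, the computation becomes
\begin{equation*}
 \int_{-1}^1 m_{n,d}(u) h_{n',d}(u)\,\d u = \frac{1}{(2\pi)^d N_d(n)} \int_{\TT^d} E_n(\t) E_{n'}(\t)\,\d \t,
\end{equation*}
and the remaining step is to evaluate the orthogonality of the $E_n$. Recalling that $E_n(\t) = \sum_{|\a|=n} \e^{\i \a \cdot \t}$ and that the exponentials $\{\e^{\i \a \cdot \t}\}_{\a \in \ZZ^d}$ are orthogonal on $\TT^d$ with $\frac{1}{(2\pi)^d}\int_{\TT^d} \e^{\i (\a-\b)\cdot \t}\,\d \t = \delta_{\a,\b}$, the cross integral $\frac{1}{(2\pi)^d}\int_{\TT^d} E_n(\t) E_{n'}(\t)\,\d \t$ counts pairs $(\a,\b)$ with $|\a|=n$, $|\b|=n'$ and $\a + \b = 0$. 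Since $\a = -\b$ forces $|\a|=|\b|$, this vanishes unless $n = n'$, and when $n=n'$ it equals the number of $\a$ with $|\a|=n$, namely $N_d(n)$. The normalization by $N_d(n)$ in the definition of $m_{n,d}$ then yields precisely $\delta_{n,n'}$.

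I expect no serious obstacle here; the whole argument is a clean chain of substitutions in which the two representations of $E_n$ from \eqref{eq:En-hnd} are used in opposite directions, and the biorthogonality is ultimately inherited from the elementary orthogonality of the Fourier exponentials. The only point requiring a moment of care is the sign convention in $E_n E_{n'}$: one must verify that the integral genuinely pairs $\e^{\i \a\cdot\t}$ against $\e^{\i\b\cdot\t}$ with the condition $\a+\b=0$ rather than $\a=\b$, so that the count matches the real-valued, even symmetry of $E_n$. Once this is confirmed, the normalizing constant $N_d(n)$ built into the definition of $m_{n,d}$ makes the identity fall out immediately.
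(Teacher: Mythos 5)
Your argument is correct, and it takes a genuinely different route from the paper's. The paper proves \eqref{eq:biortho} by multiplying the Poisson-product identity \eqref{eq:prod-poisson} by $E_n(\t)/N_d(n)$, integrating over $\TT^d$ to isolate $r^n$, invoking \eqref{eq: Md-Poisson} to pass to an integral against $m_{n,d}$, and then expanding via the generating function \eqref{eq:generat-hnd} of the $h_{k,d}$ so that comparing coefficients of $r^k$ yields the biorthogonality for all $n'$ simultaneously. You instead run the two integral representations of $E_{n'}$ against each other directly: Fubini turns $\int_{-1}^1 m_{n,d}\,h_{n',d}$ into $\frac{1}{(2\pi)^d N_d(n)}\int_{\TT^d} E_n(\t)E_{n'}(\t)\,\d\t$ via \eqref{eq:En-hnd}, and the elementary orthogonality of the exponentials (the pairing condition $\a+\b=0$ together with $|-\b|=|\b|$ forcing $n=n'$, with multiplicity $N_d(n)$) finishes the computation. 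Your justification of the interchange is essentially right, though the cleanest statement is Tonelli applied to the nonnegative B-spline: since $\int_{-1}^1 M_{d-1}(u\,|\,\cos\t_1,\ldots,\cos\t_d)\,\d u = 1/(d-1)!$ for almost every $\t$ and $h_{n',d}$, $E_n$ are bounded, the double integral converges absolutely. What your approach buys is transparency --- the biorthogonality is revealed as nothing more than the orthogonality of the Fourier exponentials transported through the B-spline kernel --- whereas the paper's generating-function argument hides the same Fubini step inside \eqref{eq: Md-Poisson} and replaces the combinatorial count by a power-series coefficient comparison; both are complete proofs.
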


\begin{proof}
Multiplying the first identity of \eqref{eq:prod-poisson} by $E_n(\t)$ and integrating over $\t \in \TT^d$, we obtain
$$
   r^n= \frac{1}{(2\pi)^d} \int_{\TT^d}  \frac{(1-r^2)^d} {\prod_{i=1}^d (1-2 r \cos \t_i + r^2)}\frac{E_n(\t)}{N_d(n)}  \,\d \t.
$$ 
Using \eqref{eq: Md-Poisson} and exchanging the order of integrals on the right-hand side, we obtain
\begin{align*}
     r^n  & = \frac{ (d-1)!}{(2\pi)^d} \int_{-1}^1  \frac{(1-r^2)^d}{(1-2 r u + r^2)^d} 
        \int_{\TT^d} M_{d-1}(u | \cos \t_1,\ldots,\cos \t_d) \frac{E_n(\t)}{N_d(n)}   \,\d \t  \,\d u \\
          & = (d-1)! \int_{-1}^1  \frac{(1-r^2)^d}{(1-2 r u + r^2)^d} m_{n,d}(u) \,\d u \\
          & = \sum_{k =0}^\infty \int_{-1}^1 h_{k,d}(u) m_{n,d}(u) \,\d u \, r^k,        
\end{align*}
where the last step follows from \eqref{eq:generat-hnd}. Since the above identity holds for $|r| <1$, comparing
the coefficients of $r^n$ proves \eqref{eq:biortho} by linear independence. 
\end{proof}

Using the orthogonality, we can now derive a series expansion of $m_{n,d}$. Let us start with $d =2$. 

\begin{prop}
For $n =0, 1,2,\ldots$, $m_{n,2}(u) = 0$ if $|u| \ge 1$, and furthermore 
\begin{equation} \label{eq:mnd=2}
  m_{n,2}(\cos \a) = \frac{2}{\pi} \sum_{k=0}^\infty \frac{\sin((n+2k+1) \a)}{n+2k+1}, \qquad 0 < \a < \pi.
\end{equation}
\end{prop}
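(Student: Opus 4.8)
The plan is to compute $m_{n,2}$ directly from its defining integral and then recognize the resulting series. The key tool is the biorthogonality relation \eqref{eq:biortho} together with the explicit formula \eqref{eq:hnd=} for $h_{n,2}$. First I would observe that, since $m_{n,2}$ is the $\ell^1$-mean of the Fourier coefficients of a compactly supported B-spline, it vanishes for $|u|\ge 1$ because the B-spline itself vanishes there; this disposes of the first assertion. For the substantive claim on $(-1,1)$, I would try to expand $m_{n,2}$ in the Gegenbauer system $\{C_k^2\}$ (equivalently in the $Z_k^1$ basis, which for $\l=1$ are the Chebyshev polynomials of the second kind up to normalization) and determine the coefficients from \eqref{eq:biortho}. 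Writing $u=\cos\a$ and recalling that for $d=2$ one has $w_1(t)=(1-t^2)^{1/2}$, the relevant orthogonal polynomials are essentially $U_k(\cos\a)=\sin((k+1)\a)/\sin\a$, which is precisely where the $\sin((n+2k+1)\a)$ factors should come from.

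Concretely, I would posit an expansion $m_{n,2}(u)=\sum_{k} a_{n,k}\,\phi_k(u)$ in an appropriate Chebyshev/Gegenbauer basis and pair it against the $h_{n',2}$ using \eqref{eq:biortho}. Since $h_{n,2}$ is, by \eqref{eq:hnd=} with $d=2$, a short combination of $Z$-polynomials, the biorthogonality becomes an explicit linear system for the $a_{n,k}$. The pattern $n,n+2,n+4,\dots$ in the index $n+2k+1$ strongly suggests that $m_{n,2}$ is supported (as a Fourier-Chebyshev series) only on indices of the same parity as $n$ and bounded below by $n$, which matches the one-sided nature of the defining B-spline support on $\triangle_2$. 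An alternative, and possibly cleaner, route is to compute $m_{n,2}(\cos\a)$ directly from the $d=2$ integral using the explicit piecewise formula for $\CM_2(\a;\t)$ established in the integrability proof, namely $\CM_2(\a;\t)=\chi_{[\t_2,\t_1]}(\a)/(\cos\t_2-\cos\t_1)$ on $\triangle_2$, together with $E_n(\t)$ and $N_2(n)=4n$; expanding $1/(\cos\t_2-\cos\t_1)$ and integrating against $E_n$ should produce the stated sine series after elementary but careful trigonometric bookkeeping.

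The main obstacle I anticipate is the convergence and closed-form summation: the series in \eqref{eq:mnd=2} is only conditionally convergent (it behaves like a Fourier series of a function with a jump or logarithmic singularity, consistent with the $1/(\cos\t_2-\cos\t_1)$ singularity seen in the $d=2$ integrability estimate), so I would need to justify the interchange of summation and integration and interpret the series as a distributional or pointwise-a.e. Fourier expansion rather than an absolutely convergent one. I would handle this by working with the Abel-type generating function: multiply by $r^n$, sum over $n$, use \eqref{eq: Md-Poisson} to pass to the Poisson-kernel side, extract $m_{n,2}$ as an explicit Abel limit, and only then let $r\to 1^-$ to recover \eqref{eq:mnd=2}. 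This keeps every manipulation inside the region $0\le r<1$ where \eqref{eq:prod-poisson} and \eqref{eq:generat-hnd} are valid, and defers the delicate limit to the very end, where the stated conditionally convergent sine series emerges as the boundary value.
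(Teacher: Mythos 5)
Your primary route---posit an expansion of $m_{n,2}$ as $\sqrt{1-u^2}$ times a Chebyshev-$U$ series supported on indices $n, n+2, n+4,\dots$, then pair against $h_{\ell,2}=(\ell+1)U_\ell-(\ell-1)U_{\ell-2}$ via the biorthogonality \eqref{eq:biortho} to get the two-term recursion forcing all coefficients to equal $1$---is exactly the paper's proof, including the identification $\sqrt{1-u^2}\,U_{n+2k}(\cos\a)=\sin((n+2k+1)\a)$. The alternative direct-integration and Abel-summation routes you sketch are not needed and are not what the paper does, but the core argument matches.
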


\begin{proof}
Let $n \ge 0$ be fixed. Since $m_{n,2}(\pm 1) =0$, we assume that $m_{n,2}(u)$ contains a factor $\sqrt{1-u^2}$
and takes  the form
$$
  m_{n,2}(u) =  \frac{2}{\pi}\sqrt{1-u^2} \sum_{k=0}^\infty a_k \frac{U_{n+2k}(u)}{n+2k+1},
$$
where the coefficients $a_k$ are real numbers that will be determined by the biorthogonality of \eqref{eq:biortho} and $U_n$ are as usual the Chebyshev polynomials of the second kind, satisfying $U_n=C^1_n$.
Using the orthogonality of $U_n$,
$$
  \frac2 {\pi} \int_{-1}^1 U_n(t) U_m(t) \sqrt{1-t^2} \,\d t = \delta_{n,m}, \qquad n,m \ge 0,
$$
and the second explicit formula, in \eqref{eq:hnd=}, 
$$
  h_{\ell,2}(u) = (\ell+1)U_\ell(u) - (\ell-1) U_{\ell-2}(u), \qquad \ell \ge 0,
$$
we see that \eqref{eq:biortho} becomes, for $\ell =0,1,\ldots$,
\begin{align*}
  \delta_{\ell,n} = \int_{-1}^1 m_{n,2}(u) h_{\ell,2}(u) \,\d u 
  & =  \sum_{k=0}^\infty a_k  \frac2 {\pi} \int_{-1}^1 \frac{U_{n+2k}(u)}{n+2k+1} h_{\ell,2}(u) \sqrt{1-u^2} \,\d u.
\end{align*}
If $\ell$ and $n$ have different parity, then both the right-hand and the left-hand side are zero. Assume now
that $\ell$ and $n$ have the same parity. If $\ell < n$, then the right-hand side is trivially zero by the orthogonality
of $U_n$. Thus, we only need to consider $\ell = n + 2j$ for $j = 0,1,\ldots$, for which the identity becomes
$$
  \delta_{j,0} = a_j - a_{j-1}, \qquad j =0, 1, 2, \ldots,
$$
where $a_{-1} =0$, so that $a_0 =1$ and $a_j =a_{j-1}$ for $j \ge 0$. Hence, $a_j = 1$ for $j=0,1,\ldots$. 
Now, setting $u = \cos \a$, then $\sqrt{1-u^2} U_{n+2k}(u) = \sin ((n+2k+1) \a)$, which gives the expression
$m_{n,d}$ in \eqref{eq:mnd=2}. 
\end{proof}

It turns out, surprisingly, that the expression \eqref{eq:mnd=2} can be written, for each $n$, as a final sum. 

\begin{thm}\label{thm:mnd=2}
For $n =0, 1,2,\ldots$, and $0 < \a < \pi$,   
\begin{align} 
  m_{2 n,2}(\cos \a) \, & = \frac12 - \frac{2}{\pi} \sum_{k=0}^{n-1} \frac{\sin((2k+1) \a)}{2k+1},  \label{eq:mnd=2Even}\\
  m_{2 n+1,2}(\cos \a)\,& = \frac12 - \frac{\a}{\pi} - \frac{2}{\pi} \sum_{k=1}^{n} \frac{\sin(2k\a)}{2k}. \label{eq:mnd=2Odd}
\end{align}
In particular, we obtain 
\begin{equation} \label{eq:m0d=2}
 m_{0,2}(u) = \frac{1}{(2\pi)^2} \int_{\TT^2} M_{1} (u | \cos \t_1, \cos\t_2) \,\d \t_1 \,\d \t_2 = 
     \begin{cases} 0 &\, u\leq -1, \\  \frac{1}{2} &\, -1< u <1, \\ 0 &\, u\geq1.  \end{cases}
\end{equation}
\end{thm}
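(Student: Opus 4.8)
The plan is to start from the infinite-series representation \eqref{eq:mnd=2} furnished by the preceding proposition and to recognize the two relevant sums as remainders of classical, conditionally convergent Fourier series. The two facts I would invoke are the square-wave expansion
$$
\sum_{k=0}^\infty \frac{\sin((2k+1)\a)}{2k+1} = \frac{\pi}{4}, \qquad 0 < \a < \pi,
$$
and the sawtooth expansion $\sum_{m=1}^\infty \frac{\sin(m\b)}{m} = \frac{\pi-\b}{2}$ for $0 < \b < 2\pi$, which upon setting $\b = 2\a$ yields
$$
\sum_{k=1}^\infty \frac{\sin(2 k \a)}{2k} = \frac{\pi - 2\a}{4}, \qquad 0 < \a < \pi.
$$
Both identities hold pointwise on the stated open intervals, so they may be applied directly at the given value of $\a$.

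For the even case I would fix the subscript equal to $2n$, so that the summand in \eqref{eq:mnd=2} carries the argument $(2n+2k+1)\a$; reindexing by $j = n+k$ turns the series into the tail $\sum_{j\ge n} \frac{\sin((2j+1)\a)}{2j+1}$ of the square-wave series. Subtracting the initial segment from $\pi/4$ and multiplying by $2/\pi$ produces \eqref{eq:mnd=2Even}. For the odd case I would fix the subscript equal to $2n+1$, so the argument becomes $(2n+2k+2)\a = 2(n+k+1)\a$; writing $j = n+k+1$ identifies the series with the tail $\sum_{j \ge n+1}\frac{\sin(2j\a)}{2j}$ of the sawtooth series, and the same subtraction gives \eqref{eq:mnd=2Odd}.

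Finally, the special value \eqref{eq:m0d=2} is the instance $n=0$ of \eqref{eq:mnd=2Even}, in which the finite sum is empty and hence $m_{0,2}(\cos\a) = \tfrac12$ for every $0 < \a < \pi$; since $\a \mapsto \cos\a$ maps $(0,\pi)$ onto $(-1,1)$, this delivers the value $\tfrac12$ on $(-1,1)$, while the vanishing of $m_{n,2}$ outside $[-1,1]$ established in the preceding proposition supplies the value $0$ for $|u|\ge 1$. The integral expression is then simply the defining formula for $m_{0,2}$ read with $E_0 \equiv 1$ and $N_2(0) = 1$.

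I expect no serious obstacle here: once the two classical sums are recalled, the argument is essentially reindexing and bookkeeping. The only point deserving a word of care is that the Fourier series involved converge only conditionally, so the closed forms must be read as pointwise identities on the open interval rather than as absolutely convergent sums; since \eqref{eq:mnd=2} is already given as a convergent series, no further convergence analysis is required.
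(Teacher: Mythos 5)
Your proposal is correct and takes essentially the same route as the paper: both recognize the series in \eqref{eq:mnd=2} as tails of the classical square-wave and sawtooth Fourier expansions on $(0,\pi)$ and subtract the initial segments after reindexing. The only cosmetic difference is that the paper derives the identity $\sum_{k\ge 1}\sin(2k\a)/(2k)=\tfrac{\pi}{4}-\tfrac{\a}{2}$ from the Fourier series of the functions $f_0$ and $f_1$ it constructs, whereas you quote the standard sawtooth series directly.
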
 

\begin{proof}
Let $f_0$ and $f_1$ be odd $2\pi$ periodic functions so that their restriction on $[0,\pi]$ are defined by 
$$
  f_0(\t) = \begin{cases} 0 & \t = 0, \\ \f\pi 4 &  0< \t <\pi, \\ 0, & \t= \pi, \end{cases} \qquad \hbox{and} \quad 
  f_1(\t) = \begin{cases}  \f \t 2 &  0 \le  \t < \pi, \\ 0, & \t= \pi. \end{cases} 
$$
A quick computation shows that the Fourier series of $f_0$ is given by 
$$
  f_0(\t) = \sum_{k=0}^\infty \frac{\sin (2k+1)\t}{2k+1}, \qquad - \pi \le \t \le \pi,
$$
where the convergence is pointwise. This gives immediately \eqref{eq:m0d=2}. Furthermore, 
for $m_{2n,2}$, we obtain from \eqref{eq:mnd=2} 
$$
 m_{2n,2}(\cos \a) = \f2 \pi \sum_{k= n}^\infty \frac{\sin ( (2k+1)\a)}{(2k+1)}
     = \f 2 \pi \left[\f \pi 4 -  \sum_{k= 0}^{n-1} \frac{\sin ( (2k+1)\a)}{(2k+1)} \right],
$$
which is \eqref{eq:mnd=2Even}. Another quick computation shows that the Fourier series of $f_1(\theta)$ is  
$$
  f_1(\t) = \sum_{n=1}^\infty \frac{( -1)^{n-1} \sin (n \t)}{n} = \sum_{n=0}^\infty \frac{\sin((2n+1)\t)}{2n+1}
       - \sum_{n=1}^\infty \frac{\sin(2n \t)}{2n},
$$ 
which shows in particular, together with the Fourier series of $f_0$, 
$$
\sum_{n=1}^\infty \frac{\sin(2n \t)}{2n} = \frac{\pi} 4 - \f{\t}{2}, \qquad  0 < \t < \pi.
$$
Thus, for $m_{2n+1,2}$, we obtain from \eqref{eq:mnd=2} that 
$$
 m_{2n+1,2}(\cos \a) = \f2 \pi \sum_{k= n+1}^\infty \frac{\sin ( 2k \a)}{2k}
     = \f 2 \pi \left[\frac{\pi} 4 - \f{\a}{2} - \sum_{k= 0}^{n} \frac{\sin ( 2k \a)}{2k} \right],
$$
which is \eqref{eq:mnd=2Odd}. 
\end{proof}

\begin{prop}
For $d >2$, the function $m_{n,d}$ is of the form 
\begin{equation} \label{eq:mnd-series}
   m_{n,d} (u) = (1-u^2)^{d-\frac{3}{2}} \frac{c_{d-1}}{(d-1)!} 
      \sum_{k=0}^\infty  \frac{(d-1)_k}{k!} \frac{C_{n+2k}^{d-1}(u)}{C_{n+2k}^{d-1}(1)},\qquad -1\leq u\leq1,
\end{equation}
where $c_{d-1}$ is the normalization constant defined by its reciprocal 
$$ 
c_{d-1}^{-1} = \int_{-1}^1(1-u^2)^{d-\frac32}\d u = \frac{\Gamma\left(\frac12\right)\Gamma\left(d-\frac12\right)}{(d-1)!}.
$$
\end{prop}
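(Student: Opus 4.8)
The plan is to mirror the argument used for $d=2$: posit a Gegenbauer-type ansatz for $m_{n,d}$ and pin down its coefficients through the biorthogonality \eqref{eq:biortho}. First I would record two structural facts. Reflecting the knots via $\t_i\mapsto\pi-\t_i$ sends $\cos\t_i$ to $-\cos\t_i$, and the standard reflection symmetry of B-splines gives $M_{d-1}(-u|\cos\t_1,\ldots,\cos\t_d)=M_{d-1}(u|\cos(\pi-\t_1),\ldots,\cos(\pi-\t_d))$, while $E_n(\pi-\t_1,\ldots,\pi-\t_d)=(-1)^nE_n(\t)$ since $\a_1+\cdots+\a_d\equiv|\a|\pmod 2$ and the index set $\{|\a|=n\}$ is symmetric under $\a\mapsto-\a$. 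Substituting into the definition of $m_{n,d}$ yields the parity relation $m_{n,d}(-u)=(-1)^nm_{n,d}(u)$. Second, because $\deg h_{\ell,d}=\ell$, the family $\{h_{\ell,d}:\ell<n\}$ spans all polynomials of degree $<n$, so \eqref{eq:biortho} forces $\int_{-1}^1 m_{n,d}(u)p(u)\,\d u=0$ for every such $p$; hence the Gegenbauer components of $m_{n,d}$ of degree $<n$ vanish. These two facts justify the ansatz
\begin{equation*}
  m_{n,d}(u)=w_{d-1}(u)\sum_{k=0}^\infty b_k\,\frac{C_{n+2k}^{d-1}(u)}{C_{n+2k}^{d-1}(1)},\qquad w_{d-1}(u)=(1-u^2)^{d-\frac32},
\end{equation*}
in which only Gegenbauer polynomials of degree $\ge n$ and of the parity of $n$ occur, and the prefactor $w_{d-1}$ is exactly the Gegenbauer weight for $\l=d-1$ (matching the $\sqrt{1-u^2}$ seen at $d=2$).

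Next I would substitute this ansatz into \eqref{eq:biortho} using the second expression for $h_{\ell,d}$ in \eqref{eq:hnd=}, namely $h_{\ell,d}=(d-1)!\sum_{j=0}^{d-1}(-1)^j\binom{d-1}{j}Z_{\ell-2j}^{d-1}$ with $Z_m^{d-1}=\frac{m+d-1}{d-1}C_m^{d-1}$, together with the Gegenbauer orthogonality
\begin{equation*}
  \int_{-1}^1 C_m^{d-1}(u)C_{m'}^{d-1}(u)w_{d-1}(u)\,\d u=\frac{1}{c_{d-1}}\,\frac{d-1}{m+d-1}\,C_m^{d-1}(1)\,\delta_{m,m'}.
\end{equation*}
Writing $\ell=n+2p$ (opposite parity makes both sides vanish, and $\ell<n$ gives $0$ on both sides), the orthogonality selects $\ell-2j=n+2k$, i.e. $k=p-j$ with $0\le j\le\min(p,d-1)$. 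The decisive simplification is that the factor $\frac{m+d-1}{d-1}$ from $Z_m^{d-1}$ cancels the factor $\frac{d-1}{m+d-1}$ from the orthogonality constant, and the two copies of $C_{n+2k}^{d-1}(1)$ cancel as well, leaving the clean relation
\begin{equation*}
  \int_{-1}^1 m_{n,d}(u)h_{\ell,d}(u)\,\d u=\frac{(d-1)!}{c_{d-1}}\sum_{j=0}^{\min(p,d-1)}(-1)^j\binom{d-1}{j}\,b_{p-j},\qquad \ell=n+2p.
\end{equation*}
Setting this equal to $\delta_{n,\ell}=\delta_{p,0}$ yields, for all $p\ge0$, a single convolution recurrence for the $b_k$.

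Finally I would solve the recurrence with a generating function. With $B(x)=\sum_{k\ge0}b_kx^k$ (extending the sum over $j$ harmlessly by $\binom{d-1}{j}=0$ for $j>d-1$ and $b_{p-j}=0$ for $p<j$), the recurrence says precisely that $(1-x)^{d-1}B(x)$ has constant term $b_0$ and no higher terms, so $(1-x)^{d-1}B(x)=b_0$ and hence $B(x)=b_0(1-x)^{-(d-1)}=b_0\sum_{k\ge0}\frac{(d-1)_k}{k!}x^k$. The case $p=0$ gives $b_0=c_{d-1}/(d-1)!$, whence $b_k=\frac{c_{d-1}}{(d-1)!}\frac{(d-1)_k}{k!}$, which is exactly \eqref{eq:mnd-series}; the evaluation of $c_{d-1}^{-1}$ is the Beta integral $\int_{-1}^1(1-u^2)^{d-3/2}\,\d u=\Gamma(\tfrac12)\Gamma(d-\tfrac12)/(d-1)!$. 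I expect the main obstacle to be not the algebra, which is routine, but the analytic legitimacy of the ansatz: one must check that the series on the right converges in a space (e.g. $L^2([-1,1])$, which is unproblematic since $d-\tfrac32\ge\tfrac32>0$ makes $w_{d-1}$ bounded) on which the pairings against every $h_{\ell,d}$ are defined, and then argue uniqueness — because $\{h_{\ell,d}\}$ exhausts the polynomials and hence separates functions in that space, the biorthogonality conditions determine $m_{n,d}$ uniquely, so the function produced by the ansatz must coincide with the $m_{n,d}$ of the Definition.
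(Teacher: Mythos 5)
Your proposal is correct and follows essentially the same route as the paper: posit the weighted Gegenbauer ansatz $(1-u^2)^{d-3/2}\sum_k b_k C_{n+2k}^{d-1}/C_{n+2k}^{d-1}(1)$, feed it into the biorthogonality \eqref{eq:biortho} via the second formula in \eqref{eq:hnd=} and the orthogonality \eqref{eq:GegenOrtho2}, and arrive at the same convolution recurrence $\sum_j(-1)^j\binom{d-1}{j}b_{p-j}=b_0\,\delta_{p,0}$. The only differences are cosmetic improvements: you solve the recurrence directly via $(1-x)^{d-1}B(x)=b_0$ rather than verifying the guessed solution with Chu--Vandermonde as the paper does, and you supply the parity and low-degree-vanishing arguments (plus the uniqueness remark) that the paper leaves implicit when it ``assumes'' the form of the expansion.
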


\begin{proof} 
For $d > 2$, the function $u\mapsto M_{d-1}(u | \cos \{\cdot\}_1, \ldots, \cos \{\cdot\}_d)$ has support set in $(-1,1)$ 
and has $d-2$ continuous derivatives, it follows that $m_{n,d}$ is a continuous and $m_{n,d}^{(j)}(\pm 1) = 0$ for $j = 0,1,\ldots,
d-2$. We assume that $m_{n,d}$ has the series expansion
$$
   m_{n,d} (u) = \frac{c_{d-1}}{(d-1)!} (1-u^2)^{d-\frac{3}{2}} \sum_{k=0}^\infty a_k^n \frac{C_{n+2k}^{d-1}(u)}{C_{n+2k}^{d-1}(1)},
$$
where the coefficients $a_k^n$ are to be determined by the biorthogonality, and the choice of index $n+2k$ 
comes from \eqref{eq:biortho} and the parity of $h_{n,d}$. Now, the orthogonality of the Gegenbauer polynomials is 
equivalent to
\begin{equation} \label{eq:GegenOrtho2}
  c_{d-1} \int_{-1}^1 \frac{C_n^{d-1}(u)}{C_n^{d-1}(1)}  Z_m^{d-1}(u) (1-u^2)^{d-\f32}\,\d u = \delta_{n,m}.
\end{equation}
Using the second explicit formula of $h_{n,d}$ in \eqref{eq:hnd=}, which shows that the term of the highest 
degree in $h_{n,d}$ is $(d-1)!Z_n^{d-1}$, whereas the term of the lowest degree in $m_{n,d}$ contains $C_n^{d-1}$,
or the term $k = 0$ in the sum, the orthogonality \eqref{eq:GegenOrtho2} implies that the identity \eqref{eq:biortho} 
for $n' = n$ becomes
\begin{align*}
     a_0^n =  \delta_{n,n} = 1,\quad \forall n\in \NN_0;
\end{align*}
moreover, for $n'=n+2\ell$ and $\ell = 1,2,3,\ldots$,  \eqref{eq:biortho} becomes 
\begin{align*}
  0 = \int_{-1}^1 m_{n,d}(u) h_{n+2\ell,d}(u)\,\d u = (d-1)! \sum_{j=0}^{d-1} (-1)^j \binom{d-1}{j} a_{\ell-j}^n.
\end{align*}
Thus, using $a_j^n = 0$ for $j < 0$ and recalling that $a_0^n = 1$, we see that $a_j^n$ satisfy
\begin{equation} \label{eq:aj-recur}
     \sum_{j=0}^{\ell}  (-1)^j\binom{d-1}{j}a_{\ell-j}^n = \delta_{0,\ell}, \qquad \ell = 0,1,\ldots, d-1.
\end{equation}
It shows, in particular, that $a_j^n$ are independent of $n$ and they can be determined recursively so that
the solution is unique. It turns out that the solution \eqref{eq:aj-recur} is given
explicitly by $a_j =  (d-1)_j / j!$. To verify that this is indeed the case, we use the identity 
$$
  a_{\ell-j} = \f{ (d-1)_{\ell -j}}{(\ell-j)!} = \frac{(d-1)_\ell (-\ell)_j} {\ell!  (2-\ell-d)_j},
$$
which follows from $(-x)_{\ell-j} = (-x)_\ell (-1)^j (1-\ell-x)_j$, and $\binom{d-1}{j} = (-1)^j (-d + 1)_j /j!$ to write 
the right-hand side of \eqref{eq:aj-recur} as a hypergeometric function, 
$$
 \sum_{j=0}^{\ell-1}  (-1)^j\binom{d-1}{j}a_{\ell-j}^n = \frac{(d-1)_\ell}{\ell!} {}_2F_1\left (\begin{matrix} -\ell, \, 1-d \\
 2-d-\ell \end{matrix};1\right )
  = \frac{(d-1)_\ell (1-\ell)_\ell }{\ell!  (2-d-\ell)_\ell},
 $$
where the last step follows from the Chu-Vandermonde identity. Since $(-\ell+1)_\ell = 0$ for $\ell \in \NN$, this 
verifies that $a_j =   (d-1)_j / j!$ is the solution of \eqref{eq:aj-recur}.  
\end{proof}
 
\begin{thm}
For $d > 2$ and $-1 \le u \le 1$, 
\begin{align*}
 m_{0,d}(u) \, & = \frac{1}{(2\pi)^d} \int_{\TT^d} M_{d-1}(u | \cos \t_1,\ldots,\cos \t_d)\,\d \t \\
    &  =  \frac{\Gamma(\frac{d+1}2)}{\sqrt{\pi}\Gamma(\f{d}{2})(d-1)!} (1-u^2)^{\f{d-2}{2}}.
\end{align*}
\end{thm}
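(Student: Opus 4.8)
The plan is to bypass the infinite series in \eqref{eq:mnd-series} and instead pin down $m_{0,d}$ by a single moment identity. First, since $E_0(\t) = \sum_{|\a|=0}\e^{\i\a\cdot\t} = 1$ and $N_d(0) = E_0(0) = 1$, the definition of $m_{n,d}$ gives
$$
  m_{0,d}(u) = \frac{1}{(2\pi)^d}\int_{\TT^d} M_{d-1}(u|\cos\t_1,\ldots,\cos\t_d)\,\d\t,
$$
which is precisely the quantity in the statement. Integrating the identity \eqref{eq: Md-Poisson} over $\t\in\TT^d$, dividing by $(2\pi)^d$, and using Fubini (legitimate by nonnegativity), the left-hand side becomes $(d-1)!\int_{-1}^1 (1-2ru+r^2)^{-d}\,m_{0,d}(u)\,\d u$, while the right-hand side factorizes across coordinates and, by the elementary Poisson integral $\frac{1}{2\pi}\int_{-\pi}^\pi (1-2r\cos\t+r^2)^{-1}\,\d\t = (1-r^2)^{-1}$, collapses to $(1-r^2)^{-d}$. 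Hence $m_{0,d}$ satisfies
\begin{equation}\label{eq:moment0}
  (d-1)!\int_{-1}^1 \frac{m_{0,d}(u)}{(1-2ru+r^2)^d}\,\d u = \frac{1}{(1-r^2)^d}, \qquad 0\le r<1.
\end{equation}

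Next I would verify that the claimed function $g(u) := \frac{\Gamma(\f{d+1}2)}{\sqrt\pi\,\Gamma(\f d2)(d-1)!}(1-u^2)^{\f{d-2}2}$ satisfies the very same identity \eqref{eq:moment0}. Substituting $u = \cos\phi$ reduces the required equality to the single definite integral
\begin{equation}\label{eq:classint}
  \int_0^\pi \frac{(\sin\phi)^{d-1}}{(1-2r\cos\phi+r^2)^d}\,\d\phi = \frac{\sqrt\pi\,\Gamma(\f d2)}{\Gamma(\f{d+1}2)}\,\frac{1}{(1-r^2)^d}.
\end{equation}
This is where the real content sits, and it is classical: writing $\int_0^\pi (\sin\phi)^{2\mu-1}(1-2r\cos\phi+r^2)^{-\nu}\,\d\phi = B(\mu,\tfrac12)\,{}_2F_1(\nu,\nu-\mu+\tfrac12;\mu+\tfrac12;r^2)$ and specializing to $\mu = \tfrac d2$, $\nu = d$, the second and third hypergeometric parameters both equal $\tfrac{d+1}2$, so the ${}_2F_1$ degenerates to ${}_1F_0(d;;r^2) = (1-r^2)^{-d}$; together with $B(\tfrac d2,\tfrac12) = \sqrt\pi\,\Gamma(\tfrac d2)/\Gamma(\tfrac{d+1}2)$ this yields \eqref{eq:classint}. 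Multiplying by $(d-1)!$ and the prefactor of $g$, the two Gamma-factors cancel and $g$ indeed satisfies \eqref{eq:moment0}.

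Finally I would close the argument by uniqueness. Both $m_{0,d}$ and $g$ are continuous on $[-1,1]$ (for $d>2$ the weight $(1-u^2)^{(d-2)/2}$ is continuous, and $m_{0,d}$ is continuous by the preceding Proposition), so it suffices to show the difference $\psi := m_{0,d} - g$ vanishes. By \eqref{eq:moment0} it satisfies $\int_{-1}^1 \psi(u)(1-2ru+r^2)^{-d}\,\d u = 0$ for all $0\le r<1$. For each such $r$ the expansion $(1-2ru+r^2)^{-d} = \sum_{n\ge0} C_n^d(u)\,r^n$ converges uniformly on $u\in[-1,1]$, since $|C_n^d(u)|\le C_n^d(1) = O(n^{2d-1})$, so term-by-term integration gives $\int_{-1}^1 \psi(u)\,C_n^d(u)\,\d u = 0$ for every $n$; as $\deg C_n^d = n$, the function $\psi$ is orthogonal to all polynomials, and continuity together with the Weierstrass theorem forces $\psi\equiv 0$. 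Thus $m_{0,d} = g$, which is the asserted formula. The main obstacle is the evaluation \eqref{eq:classint}; everything else is bookkeeping, and I would double-check the degeneration of the ${}_2F_1$ (equivalently, expand both sides of \eqref{eq:classint} in powers of $r$ and match coefficients) as a safeguard.
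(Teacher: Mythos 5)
Your argument is correct, but it takes a genuinely different route from the paper. The paper proves this theorem by summing the infinite series in \eqref{eq:mnd-series} at $n=0$: it computes the Fourier--Gegenbauer coefficients of $g(u)=(1-u^2)^{-\frac{d-1}{2}}$ via the connection coefficients between $C_{2n}^{d-1}$ and $Z_{2n-2k}^{\frac{d-1}{2}}$, recognizes the resulting expansion as exactly the series $\sum_{n\ge 0}\frac{(d-1)_n}{n!}\,C_{2n}^{d-1}(u)/C_{2n}^{d-1}(1)$ appearing in \eqref{eq:mnd-series}, and reads off the closed form. You instead bypass the series representation entirely: integrating \eqref{eq: Md-Poisson} over $\TT^d$ (your moment identity is precisely the $n=0$ case of the identity derived in the paper's proof of the biorthogonality theorem) reduces everything to showing that the candidate function has the same weighted moments $\int_{-1}^1 (1-2ru+r^2)^{-d}\,\cdot\,\d u$ for all $0\le r<1$, which you evaluate by the classical formula
$$
\int_0^\pi (\sin\phi)^{2\mu-1}\bigl(1-2r\cos\phi+r^2\bigr)^{-\nu}\,\d\phi
= B\bigl(\mu,\tfrac12\bigr)\,{}_2F_1\bigl(\nu,\nu-\mu+\tfrac12;\mu+\tfrac12;r^2\bigr)
$$
with the degenerate choice $\mu=\tfrac d2$, $\nu=d$; the uniqueness step via Gegenbauer expansion of the kernel and Weierstrass density is sound (and could even be run with $m_{0,d}$ merely in $L^1(-1,1)$, which follows from Tonelli and $\int_\RR M_{d-1}=\frac{1}{(d-1)!}$, yielding equality a.e.\ and then everywhere by continuity). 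What each approach buys: yours is self-contained modulo one classical definite integral and does not rely on the validity of the ansatz underlying \eqref{eq:mnd-series}, whereas the paper's computation of the connection coefficients does double duty, since it identifies the sum of the series for $m_{0,d}$ in a way that meshes with the recursion \eqref{eq:mnd-recur} used later. The two arguments are close cousins at bottom --- your hypergeometric evaluation is equivalent to knowing the moments $\int_{-1}^1 C_n^d(u)(1-u^2)^{\frac{d-2}{2}}\,\d u$, which is what the paper's connection-coefficient step supplies --- but the organization is genuinely different and both are valid.
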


\begin{proof}
Let $g(u) = (1-u^2)^{- \f{d-1}{2}}$. We compute the Fourier-Gegenbauer coefficients $\hat g^{d-1}_n$
defined by 
\begin{align*}
  \hat g^{d-1}_n \,& = c_{d-1} \int_{-1}^1 g(t) C_n^{{d-1}}(t) (1-t^2)^{d-\f32}\,\d t \\
     &  = c_{d-1} \int_{-1}^1 C_n^{{d-1}}(t) (1-t^2)^{\frac{d-2}{2}}\,\d t.
\end{align*}
By the parity of $C_n^{d-1}$, $\hat g^{d-1}_n = 0$ if $n$ is odd. To compute $\hat g_{2n}^{d-1}$, we
use the connection coefficients of Gegenbauer polynomials \cite[Theorem 7.1.4', p. 360]{AAR}, which gives
$$
 C_{2n}^{d-1}(t) = \sum_{k=0}^n \frac{(d-1)_{2n-k} (\frac{d-1}{2})_{k} }{(\f{d-1}{2}+1)_{2n-k} k!} 
      Z_{2n-2k}^{\frac{d-1}{2}}(u).
$$
Using the orthogonality of $Z_m^{\frac{d-1}{2}}$, we then conclude that 
$$
   \hat g^{d-1}_n = \frac{c_{d-1}}{c_{\f{d-1}{2}} } \frac{(d-1)_{n} (\frac{d-1}{2})_{n} }{(\f{d-1}{2}+1)_{n} n!} 
   = \frac{c_{d-1}}{c_{\f{d-1}{2}}}  \frac{(d-1)_{n}}{n!} \frac{d-1}{2n+d-1}.
   $$
   (Note that $c_\cdot$ is also defined for a non-integral index.) 
Consequently, the Fourier-Gegenbauer expansion of $g$ is given by
$$
  g (u)= \sum_{n=0}^\infty \hat g_n^{d-1} \frac{C_n^{d-1}(u)}{h_n^{d-1}} 
     = \frac{c_{d-1}}{c_{\f{d-1}{2}}} \sum_{n=0}^\infty  \frac{(d-1)_{n}}{n!}  \frac{C_{2n}^{d-1}(u)}{C_{2n}^{d-1}(1)}, 
$$
where $h_n^\l$ denotes the $L^2([-1,1], (1-t^2)^{\l-\f12})$ norm of $C_n^\l(t)$ and it is equal to 
$$
  h_n^\l = \frac{\l}{n+\l} C_n^\l(1), \qquad n =0,1,2,\ldots.
$$
Comparing with \eqref{eq:mnd-series} with $n =0$, we see that 
\begin{equation}\label{eq:m0d(u)=}
  m_{0,d} = (1-u^2)^{d-\frac{3}{2}} \frac{c_{\f{d-1}2}}{(d-1)!} g(u) = \frac{c_{\f{d-1}2}}{(d-1)!} (1-u^2)^{\f{d-2}2},
\end{equation}
which is the stated result. 
\end{proof}

In the case of $d =2$, it is easy to see that the explicit formula \eqref{eq:mnd=2} implies the relation
$$
    m_{n,2}(u) - m_{n+2,2}(u) = \frac{2}{\pi} \frac{\sin (n+1) \a}{n+1}
      = \frac{2}{\pi} \sqrt{1-u^2} \frac{U_n(u)}{U_n(1)}, \quad u = \cos \a.
$$
The following corollary gives a $d$-dimensional version of this recursive relation for $m_{n,d}$. 

\begin{cor}
For $d \ge 2$, $n=0,1,2,\ldots$, 
\begin{equation}\label{eq:mnd-recur}
(d-1)!  \sum_{j=0}^{d-1} (-1)^j \binom{d-1}{j} m_{n+2j,d}(u)
   = c_{d-1}(1-u^2)^{d-\frac32} \frac{C_n^{d-1}(u)}{C_n^{d-1}(1)}.
\end{equation}
\end{cor}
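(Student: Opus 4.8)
The plan is to substitute the series expansion \eqref{eq:mnd-series} for each summand $m_{n+2j,d}$ directly into the left-hand side of \eqref{eq:mnd-recur} and to collect like terms. Writing $a_k = (d-1)_k/k!$ for the coefficients determined in the preceding proposition, the factor $(d-1)!$ cancels against the $1/(d-1)!$ in \eqref{eq:mnd-series}, and the left side becomes
$$
c_{d-1}(1-u^2)^{d-\f32} \sum_{j=0}^{d-1} (-1)^j \binom{d-1}{j} \sum_{k=0}^\infty a_k \frac{C_{n+2j+2k}^{d-1}(u)}{C_{n+2j+2k}^{d-1}(1)}.
$$
Since the $j$-summation is finite, I can interchange it with the (convergent) $k$-series and reindex by $m = j+k$, so that the coefficient of $C_{n+2m}^{d-1}(u)/C_{n+2m}^{d-1}(1)$ is
$$
b_m := \sum_{j=0}^{\min(m,\,d-1)} (-1)^j \binom{d-1}{j}\, a_{m-j}, \qquad m \ge 0.
$$

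The key step is to show that $b_m = \delta_{0,m}$ for \emph{every} $m \ge 0$. The recurrence \eqref{eq:aj-recur} already establishes this in the range $0 \le m \le d-1$, but I need it for all $m$, including $m \ge d$. The cleanest route is a generating-function computation: since
$$
\sum_{k=0}^\infty a_k r^k = \sum_{k=0}^\infty \frac{(d-1)_k}{k!}\, r^k = (1-r)^{-(d-1)}, \qquad \sum_{j=0}^{d-1} (-1)^j \binom{d-1}{j} r^j = (1-r)^{d-1},
$$
their Cauchy product has generating function $(1-r)^{d-1}(1-r)^{-(d-1)} = 1$. Hence $\sum_{m=0}^\infty b_m r^m = 1$, which forces $b_m = \delta_{0,m}$ for all $m$. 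Substituting back, only the term $m=0$ survives, and since $b_0 = a_0 = 1$ this yields precisely
$$
c_{d-1}(1-u^2)^{d-\f32}\frac{C_n^{d-1}(u)}{C_n^{d-1}(1)},
$$
the right-hand side of \eqref{eq:mnd-recur}.

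The only genuinely delicate point is the passage from the range $\ell \le d-1$ covered by \eqref{eq:aj-recur} to the assertion $b_m = \delta_{0,m}$ for all $m$; this is exactly what the generating-function identity dispatches, and it simultaneously justifies organizing the whole argument at the level of formal power series in $r$ (the interchange of the finite $j$-sum with the Gegenbauer series being otherwise routine). As a consistency check, for $d=2$ the formula collapses, with $c_1 = 2/\pi$ and $C_n^1 = U_n$, to the stated recursion $m_{n,2}(u) - m_{n+2,2}(u) = \tfrac{2}{\pi}\sqrt{1-u^2}\,U_n(u)/U_n(1)$.
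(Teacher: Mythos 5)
Your argument is correct and follows essentially the same route as the paper: substitute the series \eqref{eq:mnd-series}, interchange the finite $j$-sum with the $k$-series, and observe that the resulting coefficient of $C_{n+2m}^{d-1}(u)/C_{n+2m}^{d-1}(1)$ is the convolution $\sum_j(-1)^j\binom{d-1}{j}a_{m-j}=\delta_{0,m}$. Your generating-function verification that this convolution vanishes for \emph{all} $m\ge 1$ (not just $m\le d-1$, the range in which \eqref{eq:aj-recur} is literally stated and which is all the paper explicitly invokes) is a welcome extra bit of care, though the paper's Chu--Vandermonde computation in fact also covers every $m\ge 1$.
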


\begin{proof}
Using the explicit formula of $m_{n,d}$ in \eqref{eq:mnd-series}, we obtain
\begin{align*}
 & (d-1)! \sum_{j=0}^{d-1} (-1)^j \binom{d-1}{j}  m_{n+2j,d} (u) \\
 & \qquad  = (1-u^2)^{d-\frac{3}{2}} c_{d-1} \sum_{j=0}^{d-1} (-1)^j \binom{d-1}{j}
     \sum_{k= j}^\infty  \frac{(d-1)_{k-j}}{(k-j)!} \frac{C_{n+2k}^{d-1}(u)}{C_{n+2k}^{d-1}(1)} \\
 & \qquad  = (1-u^2)^{d-\frac{3}{2}} c_{d-1} \sum_{k=0}^\infty 
  \sum_{j=0}^{d-1} (-1)^j \binom{d-1}{j} \frac{(d-1)_{k-j}}{(k-j)!} \frac{C_{n+2k}^{d-1}(u)}{C_{n+2k}^{d-1}(1)},
\end{align*}
where we have used the convention that $(d-1)_{k-j} = 0$ if $j > k$ and $\binom{d-1}{j} =0$ if $j > d-1$. 
The stated result then follows from \eqref{eq:aj-recur}. 
\end{proof}

In particular, the identity \eqref{eq:mnd-recur} shows that the finite combination of $m_{n,d}$ in the
left-hand is a polynomial of degree $n$ multiplied by $(1-t^2)^{d-\f32}$. The recursive formula can be
used to determine $m_{n,d}$ if we know the first $d-1$ elements $m_{0,d}, \ldots, m_{d-1,d}$. 
However, notice that the explicit expression of $m_{0,d}$ in \eqref{eq:m0d(u)=} contains the factor
$(1-u^2)^{\frac{d-2}{2}}$, which has a power different from the power $d - \f32$ in the right-hand side of 
\eqref{eq:mnd-recur}, we see that an analog of \eqref{eq:mnd=2Even} is unlikely to hold for $d>2$;
in particular, $m_{n,d}$ will not be a polynomial when $n$ is even for $d >2$. 

\section{Positive definite $\ell^1$-invariant functions}
\setcounter{equation}{0}
 
A function $f\in C(\TT^d)$ is a positive definite function (PDF) if for every $\Xi_N = \{\Theta_1,\ldots, \Theta_N\}$ of 
pairwise distinct points in $\TT^d$ and $N \in \NN_0$, the matrix 
$$
         f[\Xi_N]= \left [ f(\Theta_i - \Theta_j) \right]_{i,j = 1}^N
$$ 
is nonnegative definite; it is called a strictly positive definite function (SPDF) if the matrix is positive definite. 
Let $\Phi(\TT^d)$ denote the set of PDFs on $\TT^d$. By the definition of PDF, the
space $\Phi(\TT^d)$ is closed under linear combination with nonnegative coefficients; that is, if $f, g \in 
\Phi(\TT^d)$ and $c_i$ and nonnegative constants, then $c_1 f + c_2 g \in \Phi(\TT^d)$. The PDFs on $\TT^d$ 
are characterized by the following theorem

\begin{thm}
A function $f \in C(\TT^d)$ is a PDF on $\TT^d$ if and only if the Fourier coefficients $\hat f_\a$ are 
nonnegative for all $\a \in \NN_0^d$. 
\end{thm}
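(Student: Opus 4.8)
The plan is to prove the two implications separately: the easy ``if'' direction by a direct quadratic-form computation, and the substantive ``only if'' direction by realizing each Fourier coefficient as a limit of nonnegative quadratic forms attached to uniform grids on $\TT^d$.

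For sufficiency, suppose all the Fourier coefficients of $f$ are nonnegative. Applying the (product) Fej\'er means at the origin and invoking nonnegativity, the Ces\`aro sums $\sum_\a w_{N,\a}\hat f_\a$ with weights $w_{N,\a}\in[0,1]$ increasing to $1$ converge both to $f(0)$ and, by monotone convergence, to $\sum_\a \hat f_\a$; hence $\sum_\a \hat f_\a = f(0)<\infty$ and the Fourier series of $f$ converges absolutely and uniformly. Then for any pairwise distinct $\Theta_1,\ldots,\Theta_N\in\TT^d$ and any $c\in\CC^N$, the uniform convergence lets me interchange the summations to get
\begin{equation*}
  \sum_{i,j=1}^N \bar c_i c_j\, f(\Theta_i-\Theta_j)
   = \sum_{\a} \hat f_\a \Bigl| \sum_{i=1}^N c_i\, \e^{-\i\a\cdot\Theta_i}\Bigr|^2 \ge 0,
\end{equation*}
so the matrix $f[\Xi_N]$ is nonnegative definite and $f$ is a PDF.

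For necessity, I would fix $\a$ and an integer $M$, take the $M^d$ distinct grid points $\Theta_\jb=\frac{2\pi}{M}\jb$ for $\jb\in\{0,\ldots,M-1\}^d$, and apply the nonnegative definiteness of $f[\{\Theta_\jb\}]$ to the vector with entries $c_\jb=\e^{\i\a\cdot\Theta_\jb}$. The decisive point is that the grid is invariant under the translations $\Theta_\mb$: grouping the pairs $(\jb,\kb)$ by $\mb=\jb-\kb$ and using $\e^{-\i\a\cdot\Theta_\jb}\e^{\i\a\cdot\Theta_\kb}=\e^{-\i\a\cdot\Theta_\mb}$ together with $f(\Theta_\jb-\Theta_\kb)=f(\Theta_\mb)$ collapses the double sum,
\begin{equation*}
  0 \le \sum_{\jb,\kb} \bar c_\jb c_\kb\, f(\Theta_\jb-\Theta_\kb)
   = M^d \sum_{\mb}\e^{-\i\a\cdot\Theta_\mb} f(\Theta_\mb).
\end{equation*}
Multiplying by $(2\pi)^d M^{-2d}$ turns the right-hand side into a Riemann sum of the continuous periodic function $y\mapsto \e^{-\i\a\cdot y}f(y)$; letting $M\to\infty$ and using uniform continuity of $f$ on the compact torus gives $\int_{\TT^d}\e^{-\i\a\cdot y}f(y)\,\d y\ge 0$, which is $(2\pi)^d\hat f_{\a}$ up to the immaterial sign of the index. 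Since $\a$ is arbitrary this yields nonnegativity for all $\a\in\ZZ^d$, in particular for $\a\in\NN_0^d$.

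I expect the ``only if'' direction to be the main obstacle, and within it two points need care: the collapse of the grid quadratic form, which rests on the translation invariance of the uniform grid and is what forces the exponential weights to produce exactly the character $\e^{-\i\a\cdot\Theta_\mb}$; and the passage to the limit, where uniform continuity of $f$ guarantees that the Riemann sums converge to the Fourier integral. I would also note that the statement is phrased with the index set $\NN_0^d$: the necessity argument above in fact delivers nonnegativity on all of $\ZZ^d$, and the two conditions coincide for the $\ell^1$-invariant functions of interest here, since then $\hat f_\a=\hat f_{|\a|}$ and every value $|\a|=n$ is already attained within $\NN_0^d$.
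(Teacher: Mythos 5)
Your proof is correct, and the necessity direction takes a genuinely different route from the paper's. The paper first shows that $\frac{1}{(2\pi)^d}\int_{\TT^d}\int_{\TT^d} f(\theta-\phi)\,\d\theta\,\d\phi \ge 0$ for a PDF $f$ by discretizing with a positive cubature rule, and then handles the character separately: since $\e^{\i\a\cdot\t}$ is itself a PDF, Schur's product theorem makes $\t\mapsto f(\t)\e^{-\i\a\cdot\t}$ a PDF as well, so the same integral inequality applied to the product gives $\hat f_\a\ge 0$. You instead fold the character directly into the test vector $c_\jb=\e^{\i\a\cdot\Theta_\jb}$ on a uniform grid, exploit the translation invariance of the grid to collapse the quadratic form to $M^d\sum_\mb \e^{-\i\a\cdot\Theta_\mb}f(\Theta_\mb)$, and pass to the limit via Riemann sums; this avoids Schur's theorem altogether and is the more elementary and self-contained argument, at the cost of needing the explicit combinatorial bookkeeping of the grid. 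In the sufficiency direction the two arguments share the same core identity $\sum_{i,j}\bar c_i c_j f(\Theta_i-\Theta_j)=\sum_\a\hat f_\a\bigl|\sum_i c_i\e^{-\i\a\cdot\Theta_i}\bigr|^2$, but you justify the interchange of summations by first deriving absolute convergence of the Fourier series from the Fej\'er means ($\sum_\a\hat f_\a=f(0)<\infty$), a convergence point the paper passes over by appealing only to the closedness of $\Phi(\TT^d)$ under nonnegative combinations. Your closing remark is also apt: as stated the theorem restricts the index set to $\NN_0^d$, whereas both proofs naturally give (and, for sufficiency, really require) nonnegativity over all of $\ZZ^d$; the two formulations agree precisely for the $\ell^1$-invariant functions that are the paper's actual concern.
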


One direction of the theorem follows readily from the closeness of $\Phi(\TT^d)$ and that the exponential
functions $\e^{\i \a \cdot x} \in \Phi(\TT^d)$ for all $\a \in \NN_0^d$, since 
$$
  \sum_{i=1}^N \sum_{j=1}^N  c_i c_j\e^{\i \a \cdot (\Theta_i - \Theta_j)} =  \left | \sum_{i=1}^N c_i \e^{\i \a \cdot \Theta_i }
   \right|^2 \ge 0.
$$
In the other direction, if $f$ is PDF on $\TT^d$ then by the periodicity of $f$, 
$$
   \int_{\TT^d}  f (\theta) \,\d \theta = \frac{1}{(2 \pi)^d} \int_{\TT^d} \int_{\TT^d}  f (\theta - \phi ) \,\d \theta \,\d \phi. 
$$
The right-hand side is nonnegative if $f$ is a trigonometric polynomial, as can be seen by applying a positive 
cubature rule for the integral over $\TT^d$ and using the positive definiteness of $f$. In particular, this shows
that the left-hand side integral is nonnegative. Since $\e^{\i \a \cdot \t}$ is PDF, it follows from Schur's theorem that
$$
    \hat f_\a = \frac{1}{(2 \pi)^d} \int_{\TT^d} f(\t) \e^{-\i \a\cdot \t} \,\d \t  \ge 0, \qquad \forall \a \in \NN_0^d. 
$$

Recall that a function $f: \TT^d \to  \RR$ is $\ell^1$-invariant if $\hat f_\a = \hat f_\b$ for all $|\a| = |\b|$, 
$\a,\b \in\NN_0^d$. These functions are given as follows. 

\begin{thm}
A function $f \in L^1(\TT^d)$ is $\ell^1$-invariant if and only if 
\begin{equation} \label{eq:f=divideFd}
     f (\t_1,\ldots,\t_d) = [\cos \t_1, \ldots, \cos \t_d] F_d,
\end{equation}
where $F_d: [-1,1]\to  \RR$ is a $(d-1)$-times differentiable function and satisfies
\begin{equation}\label{eq:Fd=}
  F_d(\cos \t) = 2 (-1)^{\lfloor \f{d+1}{2}\rfloor} (\sin \t)^{d-1} \sum_{n=0}^\infty a_n
      \begin{cases}  - \sin (n \t) & \hbox{for $d$ even}, \\   \cos (n \t) & \hbox{for $d$ odd}, \end{cases}  
\end{equation}
with a sequence of real numbers $\{a_n\}_{n\ge 0}$. 
\end{thm}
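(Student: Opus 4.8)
The plan is to set up a dictionary between the $\ell^1$-invariant functions and the one-variable functions $F_d$ through the two families $\{E_n\}$ and $\{H_{n,d}\}$, using the representation $E_n=[\cos\t_1,\ldots,\cos\t_d]H_{n,d}$ recorded in \eqref{eq:En-hnd}. The whole statement then reduces to two observations: that $\sum_n \hat f_n H_{n,d}$ has exactly the shape prescribed in \eqref{eq:Fd=}, and that the divided difference may be moved through the summation. I would first record the elementary identity $\lfloor\tfrac{d+1}{2}\rfloor=\lfloor\tfrac{d-1}{2}\rfloor+1$, valid for every $d$, so that $(-1)^{\lfloor (d+1)/2\rfloor}=-(-1)^{\lfloor (d-1)/2\rfloor}$; comparing \eqref{eq:Hnd=} with \eqref{eq:Fd=}, this shows the correspondence between the two descriptions is simply $a_n=-\hat f_n$.

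For the direction ``$\ell^1$-invariant $\Rightarrow$ \eqref{eq:f=divideFd}'' I would start from the expansion $f=\sum_n \hat f_n E_n$ of \eqref{eq:l1-function}, substitute $E_n=[\cos\t_1,\ldots,\cos\t_d]H_{n,d}$, and set $F_d:=\sum_n \hat f_n H_{n,d}$ on $[-1,1]$. Each $H_{n,d}$ is in fact a polynomial (for $d$ odd, $(\sin\t)^{d-1}$ and $\cos n\t$ are both polynomials in $\cos\t$; for $d$ even, $(\sin\t)^{d-1}\sin n\t=(1-u^2)^{(d-2)/2}(1-u^2)U_{n-1}(u)$ is as well), hence each summand is $(d-1)$-times differentiable, and by the sign computation above $F_d$ is of the form \eqref{eq:Fd=} with $a_n=-\hat f_n$. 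Granting the interchange of the divided difference with the infinite sum, one obtains $[\cos\t_1,\ldots,\cos\t_d]F_d=\sum_n\hat f_n E_n=f$, which is \eqref{eq:f=divideFd}. For the converse I would run the computation backwards: if $F_d$ is $(d-1)$-times differentiable and of the form \eqref{eq:Fd=}, then by the same sign identity $F_d=-\sum_n a_n H_{n,d}$, so $[\cos\t_1,\ldots,\cos\t_d]F_d=-\sum_n a_n E_n$ once the summation is pulled outside; the resulting function has Fourier coefficients $\hat f_\a=-a_{|\a|}$, which depend only on $|\a|$, so it is $\ell^1$-invariant and lies in $L^1(\TT^d)$.

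The one genuine technical point, and the step I expect to be the main obstacle, is the legitimacy of moving $[\cos\t_1,\ldots,\cos\t_d]$ through the infinite series, since for $f\in L^1(\TT^d)$ the coefficients $\hat f_n$ are only known to tend to $0$ (Riemann--Lebesgue) and need not be summable. The useful remark is that, whenever the knots $\cos\t_1,\ldots,\cos\t_d$ are distinct, the divided difference is the \emph{finite} linear combination $\sum_i F_d(\cos\t_i)/\prod_{j\neq i}(\cos\t_i-\cos\t_j)$ of point evaluations, so the interchange is legitimate as soon as the one-variable series defining $F_d(\cos\t)$ converges pointwise.

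To secure this for a general $L^1$ datum I would regularize by Abel means: for $0\le r<1$ put $f_r:=\sum_n \hat f_n r^n E_n$ and $F_{d,r}:=\sum_n\hat f_n r^n H_{n,d}$, where both series converge absolutely, so that the identity $f_r=[\cos\t_1,\ldots,\cos\t_d]F_{d,r}$ is immediate; this is precisely the product-Poisson-kernel computation behind \eqref{eq:prod-poisson}, since $f_r=f*K_r$ with $K_r=\sum_n r^n E_n$ the $d$-fold product Poisson kernel, a positive approximate identity. Letting $r\to 1^-$ one has $f_r\to f$ in $L^1(\TT^d)$, which yields \eqref{eq:f=divideFd} with the limiting $F_d$ inheriting the form \eqref{eq:Fd=}. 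This Abel-summation passage to the limit is where the analytic care is concentrated; the algebraic heart of the theorem is just the exact match between \eqref{eq:Hnd=} and \eqref{eq:Fd=} encoded by $a_n=-\hat f_n$.
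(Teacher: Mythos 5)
Your proposal is correct and follows essentially the same route as the paper: both directions rest on the identity $E_n=[\cos \t_1,\ldots,\cos \t_d]H_{n,d}$ from \eqref{eq:En-hnd} and on taking $F_d$ to be (up to sign) $\sum_{n}\hat f_n H_{n,d}$, so that \eqref{eq:Hnd=} forces the shape \eqref{eq:Fd=}. The two refinements you add --- the observation that the factor $(-1)^{\lfloor (d+1)/2\rfloor}$ in \eqref{eq:Fd=} versus $(-1)^{\lfloor (d-1)/2\rfloor}$ in \eqref{eq:Hnd=} forces $a_n=-\hat f_n$ rather than the paper's asserted $a_n=\hat f_n$, and the Abel-mean regularization justifying the interchange of the divided difference with the infinite series for a general $L^1$ datum --- address points that the paper's own proof passes over silently, and they strengthen rather than alter the argument.
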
 

\begin{proof}
If $f$ is the given divided difference of $F_d$ given in \eqref{eq:Fd=}, then 
$$
  f(\t) =   \sum_{n=0}^\infty a_n [\cos \t_1,\ldots, \cos \t_d] H_{n,d} = 
       \frac12 \sum_{n=0}^\infty a_n E_n(\t). 
$$ 
By the definition of $E_n(\t)$, it follows readily that $\hat f_\a = \hat f_\b$ if $|\a| = |\b|$. Moreover, 
when its knots coalesce, the divided difference becomes a derivative as we have mentioned before. 
By \eqref{eq:divder} and the $(d-1)$-th differentiability of $F$, $f$ is continuous. 

In the other direction, if $f$ is $\ell^1$-invariant, then its Fourier series is given by \eqref{eq:l1-function}. 
By \eqref{eq:En-hnd}, we obtain that $f (\t_1,\ldots,\t_d) = [\cos \t_1, \ldots, \cos \t_d] F$ with $F$ given by
$$
  F(u) = \sum_{n=0}^\infty \hat f_n H_{n,d}(u),
$$
so that $F$ is of the form \eqref{eq:Fd=} with $a_n = \wh f_n$ by \eqref{eq:Hnd=}. The continuity of
 $f$ requires that $F$ has continuous derivatives of $(d-1)$ order by \eqref{eq:divder}. 
\end{proof}

If $f$ is $\ell^1$-invariant, then $\hat f_\a = \hat f_{|\a|}$, so that $\hat f_\a \ge 0$ if and only if 
$\hat f_n \ge 0$ in \eqref{eq:l1-function}. Consequently, the following characterization of PDFs 
holds. 

\begin{thm}
Let $f \in C(\TT^d)$ be $\ell^1$-invariant. Then $f$ is a PDF on $\TT^d$ if and only if $f$ is given by 
\eqref{eq:f=divideFd} and \eqref{eq:Fd=} with $\hat f_n \ge 0$ for all $n \in \NN_0$. 
\end{thm}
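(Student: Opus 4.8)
The plan is to obtain the statement as a direct consequence of the two theorems immediately preceding it, namely the characterization of continuous PDFs through the nonnegativity of all Fourier coefficients and the characterization of $\ell^1$-invariant functions as divided differences of the form \eqref{eq:f=divideFd}--\eqref{eq:Fd=}. No new machinery should be required; the work lies entirely in aligning the indexing of the two characterizations.

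First I would record that if $f \in C(\TT^d)$ is $\ell^1$-invariant, then the theorem characterizing $\ell^1$-invariant functions already guarantees that $f$ is given by \eqref{eq:f=divideFd} with $F_d$ of the form \eqref{eq:Fd=}, where the real sequence $\{a_n\}$ in \eqref{eq:Fd=} is precisely the sequence of reduced Fourier coefficients, $a_n = \hat f_n$, from the expansion \eqref{eq:l1-function}. Thus the representation half of the equivalence is automatic, and only the coefficient condition $\hat f_n \ge 0$ remains to be matched to positive definiteness.

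Next I would invoke the PDF characterization theorem, which asserts that a continuous $f$ is a PDF on $\TT^d$ if and only if $\hat f_\a \ge 0$ for every $\a \in \NN_0^d$. Here the $\ell^1$-invariance enters decisively: by Definition~\ref{defn:l1-invariant}, the Fourier coefficients depend only on $|\a|$, so $\hat f_\a = \hat f_{|\a|}$ for all $\a$. Since every value $n \in \NN_0$ is attained as $|\a|$ for some $\a \in \NN_0^d$ (for instance $\a = (n, 0, \ldots, 0)$), the families $\{\hat f_\a : \a \in \NN_0^d\}$ and $\{\hat f_n : n \in \NN_0\}$ coincide as sets of values. Consequently the condition that $\hat f_\a \ge 0$ for all $\a \in \NN_0^d$ is equivalent to the condition that $\hat f_n \ge 0$ for all $n \in \NN_0$, and combining this equivalence with the representation from the first step yields the theorem.

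I do not anticipate a genuine obstacle, since the statement is essentially a corollary of its two predecessors. The only point demanding care is bookkeeping: confirming that the $a_n$ appearing in \eqref{eq:Fd=} are exactly the reduced coefficients $\hat f_n$ of \eqref{eq:l1-function}, so that the sign condition stated on $\hat f_n$ is the correct one, and verifying that restricting $\a$ to $\NN_0^d$ rather than $\ZZ^d$ loses nothing, which holds because the reduced coefficient $\hat f_{|\a|}$ is insensitive to the signs of the components of $\a$.
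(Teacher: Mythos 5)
Your proposal is correct and follows the same route as the paper, which dispenses with a formal proof and simply observes (in the sentence preceding the theorem) that $\ell^1$-invariance gives $\hat f_\a = \hat f_{|\a|}$, so the nonnegativity condition of the general PDF characterization collapses to $\hat f_n \ge 0$, while the representation \eqref{eq:f=divideFd}--\eqref{eq:Fd=} is supplied by the preceding theorem on $\ell^1$-invariant functions. Your added bookkeeping --- identifying $a_n$ with $\hat f_n$ and noting that every $n$ is realized as $|\a|$ --- is exactly the implicit content of the paper's one-line argument.
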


The SPDFs have been characterized in \cite[Theorem 1]{Guella}, where it is proved that a PDF function is also
SPDF if and only if the set of indices $\mathcal{G}=\lbrace \alpha \in \ZZ^{d} \mid \hat{f}_{\alpha}>0 \rbrace$ 
intersects all the translations of each subgroup of $\ZZ^{d}$ that has the form 
$$
 (a_1\ZZ,a_2\ZZ,\ldots,a_d \ZZ),\quad  a_1,\a_2,\ldots,a_d \in \NN.
$$
More precisely, for every pair of vectors $\gamma \in \ZZ^{d},\beta \in \NN^{d}$ there exists a $z\in \ZZ^{d}$ with 
$\hat{f}_{\alpha}>0$ and $\alpha_j=\gamma_j+z_j \beta_j$, $j=1,\ldots,d$. For $\ell^1$-invariant functions, the
SPDFs are characterized below. 

\begin{thm}
Let $f \in C(\TT^d)$ be $\ell^1$-invariant. Then $f$ is a SPDF on $\TT^d$ if and only if $f$ is given by 
\eqref{eq:f=divideFd} and \eqref{eq:Fd=}, $f$ is PDF and for any pair $n<\ell \in \NN$ there is an
$m\in \NN$ with $\hat{f}_{n+m\ell}>0$ or $\hat{f}_{(\ell-n)+m\ell}>0$.  
\end{thm}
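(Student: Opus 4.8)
The plan is to read off the statement from Guella's characterization \cite{Guella}, recalled just above, by translating its coset condition through the identity $\hat f_\a = \hat f_{|\a|}$ valid for $\ell^1$-invariant $f$. Throughout I take $d \ge 2$, as elsewhere in the paper. Write $\mathcal{P} = \{ v \in \NN_0 : \hat f_v > 0 \}$ for the set of positive Fourier levels, and for $\g \in \ZZ^d$, $\b \in \NN^d$ put $s_j = \g_j \bmod \b_j$ and
$$
  A_j = \{\, |\g_j + z\b_j| : z \in \ZZ \,\}
      = \{\, v \ge 0 : v \equiv \pm s_j \pmod{\b_j} \,\},
  \qquad S(\g,\b) = A_1 + \cdots + A_d .
$$
Since $\hat f_\a = \hat f_{|\a|}$, the Guella set $\mathcal{G} = \{\a : \hat f_\a > 0\}$ meets the coset $\g + \b\ZZ^d$ exactly when $\mathcal{P} \cap S(\g,\b) \neq \emptyset$. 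Hence, by Guella's theorem, for a PDF $f$ it remains to prove that ``$\mathcal{P} \cap S(\g,\b) \neq \emptyset$ for all $\g, \b$'' is equivalent to the pairwise condition in the statement. (The other two requirements are immediate: any $\ell^1$-invariant $f$ has the form \eqref{eq:f=divideFd}--\eqref{eq:Fd=} by the previous theorems, and a SPDF is a fortiori a PDF.)

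For sufficiency (pairwise condition $\Rightarrow$ SPDF) I would first describe the arithmetic of $S(\g,\b)$. Set $L = \mathrm{lcm}(\b_1, \ldots, \b_d)$ and let $T \subseteq \ZZ/L\ZZ$ be the residues realised by $S(\g,\b)$. Two facts drive the argument: first, each $A_j$ contains the progression $\{ s_j + kL : k \ge 0 \}$, so one may add $L$ to a single coordinate without leaving $S(\g,\b)$, giving $S(\g,\b) \supseteq \{ v \ge V_0 : v \bmod L \in T \}$ for some threshold $V_0$; second, $T$ is symmetric, $-T = T$, because flipping all the signs $\pm s_j$ negates the sum. Now choose any $t \in T$ and a positive representative $n \equiv t \pmod{L}$, and take $\ell = KL$ with $K$ large enough that $\ell > \max(V_0, n)$. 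Every $v > \ell$ with $v \equiv \pm n \pmod{\ell}$ then satisfies $v \bmod L \in \{t, -t\} \subseteq T$ and $v > V_0$, hence lies in $S(\g,\b)$. Applying the pairwise condition to the pair $(n, \ell)$ yields some $m \ge 1$ with $\hat f_{n + m\ell} > 0$ or $\hat f_{(\ell - n) + m\ell} > 0$; either level exceeds $\ell$ and is congruent to $\pm n$ modulo $\ell$, so it lies in $\mathcal{P} \cap S(\g,\b)$. This handles every coset, including $\g \equiv 0$ (take $t = 0$, $n = L$), and Guella's criterion then gives that $f$ is SPDF.

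For necessity I would argue by contraposition. Suppose the pairwise condition fails for some $n < \ell$, so that $\mathcal{P}$ contains no value larger than $\ell$ congruent to $\pm n$ modulo $\ell$. I would then exhibit one coset that $\mathcal{G}$ misses: take $\b = (\ell, 2\ell, \ell, \ldots, \ell)$ and $\g = (n, \ell, 0, \ldots, 0)$. In the first coordinate $|\a_1| \equiv \pm n \pmod{\ell}$ with $|\a_1| \ge 1$; in the second, $s_2 = \ell$ forces $|\a_2| \in \{\ell, 3\ell, 5\ell, \ldots\}$, each a multiple of $\ell$ and at least $\ell$; the remaining coordinates contribute nonnegative multiples of $\ell$. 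Thus every $\a$ in the coset has $|\a| > \ell$ and $|\a| \equiv \pm n \pmod{\ell}$, i.e. $|\a| \in \{ n + m\ell, (\ell - n) + m\ell : m \ge 1 \}$, which by the failure of the pairwise condition is disjoint from $\mathcal{P}$. Hence $\mathcal{G}$ avoids this coset and, by Guella's theorem, $f$ is not SPDF.

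The step I expect to be the main obstacle is the threshold matching in the sufficiency direction: the pairwise condition produces only a single positive level for each pair $(n, \ell)$, with no a priori bound on its size, so it is essential to inflate the modulus to $\ell = KL$ in advance, so that both candidate levels $n + m\ell$ and $(\ell - n) + m\ell$ are automatically forced above $V_0$ and into residue classes modulo $L$ lying in $T$; here the symmetry $-T = T$ is exactly what makes the second candidate admissible and is easy to overlook. By comparison, the necessity direction is a short explicit choice of coset once one notices that the factor $2\ell$ in the second coordinate is precisely what lifts the attainable $\ell^1$-norms strictly above $\ell$, which is the source of the restriction $m \ge 1$ (rather than $m \ge 0$) in the statement.
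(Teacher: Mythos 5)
Your proposal is correct and follows the same skeleton as the paper's proof --- both directions reduce to Guella's coset criterion via the identity $\hat f_\a=\hat f_{|\a|}$ --- but the execution differs in both halves. For sufficiency the paper is far more direct: it normalizes $0\le\g_j\le\b_j$, sets $n=|\g|$, $\ell=|\b|$, and simply observes that the coset elements $\g+m\b$ and $\g-(m+1)\b$ have $\ell^1$-norms $n+m\ell$ and $(\ell-n)+m\ell$, so the hypothesis applied to the single pair $(n,\ell)$ already produces a positive coefficient in the coset. Your route through the sumset $S(\g,\b)$, its eventual periodicity modulo $L=\mathrm{lcm}(\b_1,\dots,\b_d)$ and the symmetry $-T=T$ is valid but does much more work than is needed; its one genuine payoff is that choosing $n=L$ when $t=0$ handles the degenerate coset $\g\equiv 0\pmod\b$, where the paper's normalization would give $n=0\notin\NN$ and is silent. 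For necessity your coset $\g=(n,\ell,0,\dots,0)$, $\b=(\ell,2\ell,\ell,\dots,\ell)$ differs from the paper's $\g=(n,0,\dots,0)$, $\b=(\ell,\dots,\ell)$, and here your extra care actually matters: the paper's coset also realizes the $\ell^1$-norms $n$ and $\ell-n$ (the $m=0$ levels), which the hypothesis $\hat f_{n+m\ell}=\hat f_{(\ell-n)+m\ell}=0$ for $m\in\NN$ does not rule out, so its argument as written leaves a gap (e.g.\ a PDF with $\hat f_1>0$ and all other coefficients zero meets the paper's coset but is not SPDF). Your second coordinate forces every $|\a|$ in the coset strictly above $\ell$, so only the $m\ge 1$ levels occur and the contradiction is airtight. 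In short: your sufficiency argument could be compressed to the paper's two lines, while your necessity argument is a genuine improvement on the paper's.
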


\begin{proof}
We prove that the given condition is equivalent to the condition given in \cite{Guella}, if we assume the kernel 
to be $\ell^1$-summable. 

We start with sufficiency. Assume $f$ is PDF and satisfies the condition of the theorem. For any two vectors $\gamma \in \ZZ^{d},\beta \in \NN^{d}$ we can assume without loss of generality that $0\leq \gamma_j\leq \beta_j$, $j=1,\ldots d$. 
We define $n=\vert \gamma \vert$ and $\ell=\vert \beta \vert$. Then there exists an $m\in \NN_0$ with 
$\hat{f}_{n+\ell m}>0$ or $\hat{f}_{(\ell-n)+\ell m}>0$.  If $\hat{f}_{n+\ell m}>0$,  then
$$
\hat{f}_{\gamma+m \beta}=\hat{f}_{n+\ell m}>0
$$
since $\vert \gamma+m \beta\vert= n+ m\ell$. Whereas if $\hat{f}_{(n-\ell)+\ell m}>0$, we can choose $m'=m+1$. 
Now $\vert \gamma-m' \beta \vert=\vert \gamma-\beta- m \beta \vert =(\ell-n) +m \ell$ implies 
$$
 \hat{f}_{\gamma-m'\beta}=\hat{f}_{(n-\ell)+\ell m}>0.
 $$

For the necessity, assume the assumption of the theorem does not hold, then there exists $\ell>n \in \NN$ 
such that $\hat{f}_{n+m\ell}=0$ and $\hat{f}_{(\ell-n)+\ell m}=0$ for all $m\in \NN$. Set  
$$
\gamma=(n,0,\ldots,0)\in \ZZ^{d},\quad \beta=(\ell,\ldots,\ell)\in \ZZ^{d}.
$$ 
Then for any $z\in \ZZ^{d}$ define $\alpha\in \ZZ^{d}$, $\alpha_j=\gamma_j+z_j \beta_j$, so that with $m=\vert z\vert$,
\begin{equation*}
\vert \alpha\vert=\vert n+ z_1 \ell\vert + \ell \sum_{j=2}^{d-1} \vert z_j\vert= \begin{cases} n+m\ell,& z_1\geq0, \\
(\ell-n)+(m-1)\ell,& z_1<0. \end{cases} 
\end{equation*}
Thereby, there would be no coefficients $\hat{f}_{\alpha}>0$, $\alpha_j=\gamma_j+z_j \beta_j$ for this choice 
 of $\gamma$ and $\beta$, contradicting strict positive definiteness. 
\end{proof}

\end{document}